\author{Luca Martinazzi\thanks{This work was supported by ETH Research Grant no. ETH-02 08-2.}
\\ \small ETH Zurich \\ \small R\"amistrasse 101, CH-8092 \\ \small \texttt{luca@math.ethz.ch}
}
\title{A threshold phenomenon for embeddings of $H^m_0$ into Orlicz spaces}
\newtheorem{trm}{Theorem}
\newtheorem{prop}[trm]{Proposition}
\newtheorem{cor}[trm]{Corollary}
\newtheorem{lemma}[trm]{Lemma}
\newcommand{\R}[1]{\mathbb{R}^{#1}}
\newcommand{\de}{\partial}
\newenvironment{proof}{\noindent\emph{Proof.}}{\hfill$\square$\medskip}
\newenvironment{rmk}{\medskip\noindent\emph{Remark.}}{\medskip}
\DeclareMathOperator{\diver}{div}
\DeclareMathOperator{\loc}{loc}
\DeclareMathOperator*{\dist}{dist}
\DeclareMathOperator*{\Intm}{\int\!\!\!\!\!\! \rule[2.6pt]{6.5pt}{.4pt}}
\begin{document}
\maketitle

\begin{abstract}
Given an open bounded domain $\Omega\subset\R{2m}$ with smooth boundary, we consider a sequence $(u_k)_{k\in\mathbb{N}}$ of positive smooth solutions to 
$$
\left\{
\begin{array}{ll}
(-\Delta)^m u_k=\lambda_k u_k e^{mu_k^2} & \text{in }\Omega\\
u_k=\de_\nu u_k=\ldots =\de_\nu^{m-1} u_k=0 &\text{on } \de \Omega,
\end{array}
\right.
$$
where $\lambda_k\to 0^+$. Assuming that the sequence is bounded in $H^m_0(\Omega)$, we study its concentration-compactness behavior. We show that if the sequence is not precompact, then
$$\liminf_{k\to\infty}\|u_k\|^2_{H^m_0}:=\liminf_{k\to\infty}\int_\Omega u_k(-\Delta)^m u_k dx\geq \Lambda_1,$$
where $\Lambda_1=(2m-1)!\mathrm{vol}(S^{2m})$ is the total $Q$-curvature of $S^{2m}$.
\end{abstract}

\section{Introduction and statement of the main result}

\medskip

Let $\Omega\subset\R{2m}$ be open, bounded and with smooth boundary, and let a sequence $\lambda_k\to 0^+$ be given. Consider a sequence $(u_k)_{k\in\mathbb{N}}$ of smooth solutions to
\begin{equation}\label{eq0}
\left\{
\begin{array}{ll}
(-\Delta)^m u_k =\lambda_k u_k e^{m u_k^2} &\textrm{in } \Omega\\
u_k>0 & \textrm{in }\Omega \rule{0cm}{0.4cm}\\
u_k=\partial_\nu u_k=\ldots =\partial_\nu^{m-1} u_k=0& \textrm{on } \partial\Omega. \rule{0cm}{0.4cm} 
\end{array}
\right.
\end{equation}
Assume also that
\begin{equation}\label{eq1}
\int_\Omega u_k(-\Delta)^m u_k dx=\lambda_k\int_\Omega u_k^2 e^{m u_k^2}dx\to\Lambda \geq 0\quad \textrm{as } k\to \infty.
\end{equation}
In this paper we shall prove

\begin{trm}\label{trm1} Let $(u_k)$ be a sequence of solutions to \eqref{eq0}, \eqref{eq1}. Then either

\medskip

\noindent (i) $\Lambda=0$ and $u_k\to 0$ in $C^{2m-1,\alpha}(\Omega)$,\footnote{Here and in the following $\alpha\in [0,1)$ is an arbitrary H\"older exponent.} or
 
\medskip
 
\noindent (ii) We have $\sup_\Omega u_k\to \infty$ as $k\to\infty$. Moreover there exists $I\in\mathbb{N}\backslash\{0\}$ such that $\Lambda\geq I\Lambda_1$, where $\Lambda_1:=(2m-1)!\mathrm{vol}(S^{2m})$,  and up to a subsequence there are $I$ converging sequences of points $x_{i,k}\to x^{(i)}$ and of positive numbers $r_{i,k}\to 0$, the latter defined by
\begin{equation}\label{rik}
\lambda_k r_{i,k}^{2m} u_k^2(x_{i,k}) e^{m u_k^2(x_{i,k})}=2^{2m}(2m-1)!,
\end{equation}
such that the following is true:
\begin{enumerate}
\item If we define
$$\eta_{i,k}(x):=u_k(x_{i,k})(u_k(x_{i,k}+r_{i,k}x)-u_k(x_{i,k}))+\log 2$$
for $1\leq i\leq I$, then 
\begin{equation}\label{etak}
\eta_{i,k}(x)\to \eta_0(x)=\log\frac{2}{1+|x|^2}\quad \textrm{in } C^{2m-1}_{\loc}(\R{2m})\quad (k\to\infty).
\end{equation}
\item For every $1\leq i\neq j \leq I$ we have $\frac{|x_{i,k}-x_{j,k}|}{r_{i,k}}\to\infty$ as $k\to\infty$.

\item Set $R_k(x):=\inf_{1\leq i\leq I}|x-x_{i,k}|$. Then
\begin{equation}\label{Rk}
\lambda_k R_k^{2m}(x) u_k^2(x)e^{m u_k^2(x)}\leq C,
\end{equation}
where $C$ does not depend on $x$ or $k$.
\end{enumerate}
Finally $u_k\rightharpoonup 0$ in $H^m(\Omega)$ and $u_k\to 0$ in $C^{2m-1,\alpha}_{\loc}(\overline\Omega\backslash\{x^{(1)},\ldots, x^{(I)}\})$.
\end{trm}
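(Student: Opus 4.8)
The plan is to distinguish the two alternatives by the behaviour of $\mu_k:=\sup_\Omega u_k$, and to run a bubbling analysis when $\mu_k\to\infty$. The easy case is $\mu_k$ bounded: then $\lambda_k u_k e^{mu_k^2}\to0$ in $L^\infty(\Omega)$ because $\lambda_k\to0$, and elliptic estimates for the Dirichlet problem for $(-\Delta)^m$ (via the Green representation on $\Omega$) give $u_k$ bounded in $W^{2m,p}(\Omega)$ for every $p$, hence in $C^{2m-1,\alpha}(\overline{\Omega})$; since the only polyharmonic function vanishing with its first $m-1$ normal derivatives on $\de\Omega$ is $0$, every subsequential limit is $0$, so $u_k\to0$ in $C^{2m-1,\alpha}(\Omega)$ and $\Lambda=0$ by \eqref{eq1}. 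This is alternative (i). A short additional argument shows that if instead $\mu_k\to\infty$ then $\lambda_k\mu_k^2 e^{m\mu_k^2}\to\infty$: otherwise, splitting $\{u_k\le1\}$ from $\{u_k>1\}$ and using that $t\mapsto t^2 e^{mt^2}$ is increasing together with \eqref{eq1}, the right-hand side of \eqref{eq0} would be $L^\infty$-bounded, contradicting $\mu_k\to\infty$. Hence $r_{1,k}\to0$ once $x_{1,k}$ is chosen through \eqref{rik}.

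Suppose now $\mu_k\to\infty$. I would take $x_{1,k}$ a maximum point of $u_k$, $r_{1,k}$ as in \eqref{rik}, and $\eta_{1,k}$ as in the statement. Writing $u_k(x_{1,k}+r_{1,k}x)=\mu_k+\mu_k^{-1}(\eta_{1,k}(x)-\log 2)$ and expanding the square, $m u_k^2(x_{1,k}+r_{1,k}x)=m\mu_k^2+2m\eta_{1,k}(x)-2m\log 2+o(1)$ locally uniformly, so by \eqref{rik}
$$(-\Delta)^m\eta_{1,k}=\lambda_k r_{1,k}^{2m}\mu_k^2 e^{m\mu_k^2}\,2^{-2m}e^{2m\eta_{1,k}}(1+o(1))=(2m-1)!\,e^{2m\eta_{1,k}}(1+o(1)).$$
As $\eta_{1,k}\le\log 2=\eta_{1,k}(0)$ the right-hand side is uniformly bounded; local elliptic estimates on balls (where Boggio's positivity makes the Green potential tractable) then give two-sided local bounds and $\eta_{1,k}\to\eta$ in $C^{2m-1}_{\loc}(\R{2m})$, with $(-\Delta)^m\eta=(2m-1)!e^{2m\eta}$, $\max\eta=\eta(0)=\log 2$, and $\int_{\R{2m}}(2m-1)!e^{2m\eta}<\infty$ by Fatou and \eqref{eq1}. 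By the classification of solutions with finite total $Q$-curvature I would conclude $\eta=\eta_0=\log\frac{2}{1+|x|^2}$, which is \eqref{etak} for $i=1$; moreover $\int_{\R{2m}}(2m-1)!e^{2m\eta_0}=\Lambda_1$, and localizing the energy \eqref{eq1} near $x_{1,k}$ at scale $r_{1,k}$ gives $\Lambda\ge\Lambda_1$.

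Next I would iterate this extraction. Given centers $x_{1,k},\dots,x_{j,k}$ with scales $r_{i,k}$ satisfying the mutual separation of item (2), set $R_k(x)=\inf_{i\le j}|x-x_{i,k}|$ and ask whether $\lambda_k R_k^{2m}u_k^2 e^{mu_k^2}\le C$ uniformly. If so, stop with $I=j$, and \eqref{Rk} holds. If not, take a point realizing a large value of this quantity as the next center $x_{j+1,k}$, define $r_{j+1,k}$ by \eqref{rik}, and repeat the blow-up above (now $\eta_{j+1,k}(0)=\log 2$ is a maximum only over the relevant region, which still suffices) to produce a new bubble carrying energy $\Lambda_1+o(1)$ and disjoint at its scale from the earlier ones. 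Since each bubble consumes at least $\Lambda_1+o(1)$ and $\Lambda<\infty$ by \eqref{eq1}, the process terminates after finitely many steps, say $I\ge1$, whence $\Lambda\ge I\Lambda_1$, and \eqref{Rk} is in force. A Rellich--Pohozaev identity together with the vanishing Dirichlet data rules out concentration on $\de\Omega$, so $x^{(i)}\in\Omega$. Finally, \eqref{Rk} makes $u_k$ locally uniformly bounded on $\overline{\Omega}\setminus\{x^{(1)},\dots,x^{(I)}\}$, hence $\lambda_k u_k e^{mu_k^2}\to0$ in $C^0_{\loc}$ there and, by elliptic estimates up to the boundary, $u_k\to u_\infty$ in $C^{2m-1,\alpha}_{\loc}(\overline{\Omega}\setminus\{x^{(i)}\})$; passing to the limit in \eqref{eq0} against test functions gives $(-\Delta)^m u_\infty=\sum_i c_i\delta_{x^{(i)}}$ with $c_i\ge0$, but $u_\infty\in H^m_0(\Omega)$ as a weak $H^m$-limit, while the fundamental solution of $(-\Delta)^m$ in $\R{2m}$ (behaving like $\log\tfrac1{|x-x^{(i)}|}$) is not in $H^m$ near its pole, forcing $c_i=0$; hence $u_\infty=0$ and, the whole sequence being bounded, $u_k\rightharpoonup0$ in $H^m(\Omega)$.

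The main obstacle is that for $m\ge2$ neither the maximum principle nor the positivity of the Dirichlet Green's function on a general $\Omega$ is available, so the control of $u_k$ on the necks between bubbles and near $\de\Omega$ — in particular the uniform estimate \eqref{Rk} and the boundary regularity — cannot be had by the second-order arguments and must instead be obtained through the Green representation with the sign-changing polyharmonic Green's function, exploiting Boggio positivity only on small balls around the $x_{i,k}$ together with sharp $L^1$/Orlicz-type bounds on $\lambda_k u_k e^{mu_k^2}$. The classification of the profile $\eta_0$ and the separation/counting bookkeeping are the other delicate points; by contrast the scaling computation yielding \eqref{etak} is essentially routine.
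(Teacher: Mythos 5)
Your outline captures the broad strategy the paper uses (split on whether $\mu_k=\sup u_k$ is bounded; rescale at maximum points; classify the limiting profile; iterate and stop by energy counting). But there are two places where the proposal asserts, rather than proves, the very points the paper identifies as the hard ones, and for $m\ge 2$ those assertions are not correct as stated.

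\textbf{Compactness of the rescaled sequence.} You write that because $\eta_{1,k}\le\log 2=\eta_{1,k}(0)$, the right-hand side of $(-\Delta)^m\eta_{1,k}=(2m-1)!\,e^{2m\eta_{1,k}}(1+o(1))$ is uniformly bounded, and you then conclude two-sided local bounds and $C^{2m-1}_{\loc}$ compactness by ``local elliptic estimates on balls.'' This fails for $m\ge 2$: an $L^\infty$ bound on $\Delta^m\eta_k$ together with an upper bound on $\eta_k$ does not control the $2m$-harmonic part of $\eta_k$ from below, and the sequence could converge to $-\infty$ locally or concentrate along a lower-dimensional set. Precisely this trichotomy (concentration-compactness from \cite{mar2}) is what the paper has to invoke, and ruling out the bad cases is where Lemma \ref{lorentz} (Zygmund/Lorentz estimate $\|\nabla^\ell u_k\|_{(2m/\ell,2)}\le C$) and the gradient estimate of Lemma \ref{stimaBR} enter. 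Relatedly, your expansion $mu_k^2(x_{1,k}+r_{1,k}x)=m\mu_k^2+2m\eta_{1,k}(x)-2m\log 2 + o(1)$ \emph{locally uniformly} already presupposes that $u_k(x_{1,k}+r_{1,k}\cdot)-u_k(x_{1,k})\to 0$ locally uniformly; this is exactly Lemma \ref{lemmauk} in the paper, and its proof is a nontrivial combination of $H^m$ bounds, a harmonic/inhomogeneous splitting, Pizzetti's formula, and the Liouville-type Theorem \ref{trmliou}. It cannot be taken for granted.

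\textbf{Classification of the profile.} You conclude $\eta=\eta_0$ from ``the classification of solutions with finite total $Q$-curvature.'' For $m\ge2$, finite total $Q$-curvature plus $(-\Delta)^m\eta=(2m-1)!e^{2m\eta}$ does \emph{not} imply that $\eta$ is the standard bubble: there exist non-spherical entire solutions (cf.\ Lin for $m=2$, and \cite{mar1}) that still have $e^{2m\eta}\in L^1$; they are characterized by $\lim_{|x|\to\infty}(-\Delta)^j\eta=a<0$ for some $1\le j\le m-1$. The paper excludes these precisely by means of the scale-invariant gradient estimate \eqref{nablaell2}, which is a consequence of Lemma \ref{stimaBR}. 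Having a maximum of $\eta$ at $0$ does not by itself rule out the non-standard solutions, so your argument leaves a real gap.

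Your final paragraph essentially concedes both points as ``the main obstacle'' — but naming the obstacle is not the same as resolving it; the Lorentz-space elliptic estimate (Theorem \ref{LlogL}) and the $L^1$-gradient bound near the blow-up scale (Lemma \ref{stimaBR}) are the paper's new technical input, and your proposal as written has no substitute for them. The other deviations (Rellich--Pohozaev to exclude boundary concentration, and the fundamental-solution argument to force $c_i=0$) are plausible sketches but also unsubstantiated; the paper's arguments there are simpler (a scaling/Sobolev argument for the boundary, and $\Delta^m u_k\to 0$ in $L^1$ for the weak limit).
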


Solutions to \eqref{eq0} arise from the Adams-Moser-Trudinger inequality \cite{ada}:
\begin{equation}\label{ada}
\sup_{u\in H^m_0(\Omega),\, \|u\|_{H^m_0}^2\leq \Lambda_1}\Intm_\Omega e^{mu^2}dx= c_0(m)<+\infty,
\end{equation}
where $c_0(m)$ is a dimensional constant, and $H^m_0(\Omega)$ is the Beppo-Levi defined as the completion of $C^\infty_c(\Omega)$ with respect to the norm\footnote{The norm in \eqref{norm} is equivalent to the usual Sobolev norm
$\|u\|_{H^m}:=\left(\sum_{\ell=0}^m\|\nabla^\ell u\|_{L^2}\right)^\frac{1}{2},$
thanks to elliptic estimates.}
\begin{equation}\label{norm}
\|u\|_{H^m_0}:=\|\Delta^\frac{m}{2}u\|_{L^2}=\bigg(\int_\Omega |\Delta^\frac{m}{2}u|^2dx\bigg)^\frac{1}{2},
\end{equation}
and we used the following notation:
\begin{equation}\label{Delta}
\Delta^\frac{m}{2}u:=\left\{
\begin{array}{ll}
\Delta^n u\in\R{} &\text{if } m=2n\text{ is  even},\\
\nabla \Delta^n u\in\R{2m} &\text{if } m=2n+1\text{ is  odd}.\\
\end{array}
\right.
\end{equation}
In fact \eqref{eq0} is the Euler-Lagrange equation of the functional
$$F(u):=\frac{1}{2}\int_\Omega |\Delta^\frac{m}{2}u|^2dx-\frac{\lambda}{2m}\int_\Omega e^{mu^2}dx$$
(where $\lambda=\lambda_k$ plays the role of a Lagrange multiplier), which is well defined and smooth thanks to \eqref{ada}, but does not satisfy the Palais-Smale condition. For a more detailed discussion, in the context of Orlicz spaces, we refer to \cite{str}.

\medskip

The function $\eta_0$ which appears in \eqref{etak} is a solution of the higher-order Liouville's 
equation
\begin{equation}\label{eqlio}
(-\Delta)^m \eta_0 =(2m-1)!e^{2m\eta_0},\quad \text{on }\R{2m}.
\end{equation}
We recall (see e.g. \cite{mar1}) that if $u$ solves $(-\Delta)^m u=Ve^{2mu}$ on $\R{2m}$, then the conformal metric $g_u:=e^{2u}g_{\R{2m}}$ has $Q$-curvature $V$, where $g_{\R{2m}}$ denotes the Euclidean metric.
This shows a surprising relation between Equation \eqref{eq0} and the problem of prescribing the $Q$-curvature. In fact $\eta_0$ has also a remarkable geometric interpretation: If $\pi:S^{2m}\to \R{2m}$ is the stereographic projection, then
\begin{equation}\label{proj}
e^{2\eta_0}g_{\R{2m}}=(\pi^{-1})^* g_{S^{2m}},
\end{equation}
where $g_{S^{2m}}$ is the round metric on $S^{2m}$.
Then \eqref{proj} implies
\begin{equation}\label{etak2}
(2m-1)!\int_{\R{2m}}e^{2m \eta_0}dx=\int_{S^{2m}}Q_{S^{2m}}\mathrm{dvol}_{g_{S^{2m}}}=(2m-1)!|S^{2m}|=\Lambda_1.
\end{equation}
This is the reason why $\Lambda\geq I\Lambda_1$ in case (ii) of Theorem \ref{trm1} above, compare Proposition \ref{blowup}.

\medskip

Theorem \ref{trm1} have been proved by Adimurthi and M. Struwe \cite{AS} and Adimurthi and O. Druet \cite{AD} in the case $m=1$, and by F. Robert and M. Struwe \cite{RS} for $m=2$, and
we refer to them for further motivations and references. Here, instead, we want to point out the main ingredients of our approach. Crucial to the proof of Theorem \ref{trm1} are the gradient estimates in Lemma \ref{stimaBR} and the blow-up procedure of Proposition \ref{blowup}. For the latter, we rely on a concentration-compactness result from \cite{mar2} and a classification result from \cite{mar1}, which imply, together with the gradient estimates, that at the finitely many concentration points $\{x^{(1)},\ldots, x^{(I)}\}$, the profile of $u_k$ is $\eta_0$, hence an energy not less that $\Lambda_1$ accumulates, namely
$$\lim_{R\to 0}\limsup_{k\to\infty}\int_{B_R(x^{(i)})}\lambda_k u_k^2 e^{mu_k^2}dx\geq \Lambda_1,\quad \text{for every }1\leq i\leq I.$$
As for the gradient estimates, if one uses \eqref{eq0} and \eqref{eq1} to infer $\|\Delta^m u_k\|_{L^1(\Omega)}\leq C$, then elliptic regularity gives $\|\nabla^\ell u_k\|_{L^p(\Omega)}\leq C(p)$ for every $p\in [1,2m/\ell)$. These bounds, though, turn out to be too weak for Lemma \ref{stimaBR} (see also the remark after Lemma \ref{lorentz}). One has, instead, to fully exploit the integrability of $\Delta^m u_k$ given by \eqref{eq1}, namely $\|\Delta^m u_k\|_{L(\log L)^{1/2}(\Omega)}\leq C$, where $L(\log L)^{1/2}\subsetneq L^1$ is the Zygmund space. Then an interpolation result from \cite{BS} gives uniform estimates for $\nabla^\ell u_k$ in the Lorentz space $L^{(2m/\ell, 2)}(\Omega)$, $1\leq \ell\leq 2m-1$, which are sharp for our purposes (see Lemma \ref{lorentz}).

We remark that when $m=1$, things simplify dramatically, as we can simply integrate by parts \eqref{eq1} and get
$$\|\nabla u_k\|_{L^{(2,2)}(\Omega)}=\|\nabla u_k\|_{L^2(\Omega)}\leq C.$$
In the case $m=2$, F. Robert and M. Struwe \cite{RS} proved a slightly weaker form of our Lemma \ref{stimaBR} by using subtle estimates in the $BMO$ space, whose generalization to arbitrary dimensions appears quite challenging. Our approach, on the other hand, is simpler and more transparent.

\medskip

Recently O. Druet \cite{dru} for the case $m=1$, and M. Struwe \cite{str2} for $m=2$ improved the previous results by showing that in case (ii) of Theorem \ref{trm1} we have $\Lambda=L\Lambda_1$ for some positive $L\in \mathbb{N}$.

\medskip

In the following, the letter $C$ denotes a generic positive constant, which may change from line to line and even within the same line.

\medskip

I'm grateful to Prof. Michael Struwe for many useful discussions.

\section{Proof of Theorem \ref{trm1}}\label{quadratic}

Assume first that $\sup_\Omega u_k\leq C$. Then $\Delta^m u_k\to 0$ uniformly, since $\lambda_k\to 0$. By elliptic estimates we infer $u_k\to 0$ in $W^{2m,p}(\Omega)$ for every $1\leq p<\infty$, hence $u_k\to 0$ in $C^{2m-1,\alpha}(\Omega)$, $\Lambda=0$ and we are in case (i) of Theorem \ref{trm1}.

From now on, following the approach of \cite{RS}, we assume that, up to a subsequence, $\sup_\Omega u_k\to\infty$ and show that we are in case (ii) of the theorem. In Section \ref{blowup1} we analyze the asymptotic profile at blow-up points. In Section \ref{blowup2} we sketch the inductive procedure which completes the proof.

\subsection{Analysis of the first blow-up}\label{blowup1}

Let $x_k=x_{1,k}\in\Omega$ be a point such that
$u_k(x_k)=\max_{\Omega}u_k,$
and let $r_k=r_{1,k}$ be as in \eqref{rik}.
Integrating by parts in \eqref{eq1}, we find $\|\Delta^\frac{m}{2}u_k\|_{L^2(\Omega)}\leq C$ which, together with the boundary condition and elliptic estimates (see e.g. \cite{ADN}), gives
\begin{equation}\label{Hm}
\|u_k\|_{H^m(\Omega)}\leq C.
\end{equation}

\begin{lemma}\label{boundary} We have
$$\lim_{k\to\infty} \frac{\dist(x_k,\partial\Omega)}{r_k}=+\infty.$$
\end{lemma}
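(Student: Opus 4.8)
The plan is to argue by contradiction: suppose, along a subsequence, that $\dist(x_k,\partial\Omega)/r_k \to d < \infty$. After translating and rotating so that $x_k$ sits at the origin and the nearest boundary point lies in a fixed direction, I would rescale at rate $r_k$ around $x_k$, setting $\eta_k(x) := u_k(x_k)(u_k(x_k+r_k x) - u_k(x_k)) + \log 2$ as in \eqref{etak}, but now on the rescaled domains $\Omega_k := (\Omega - x_k)/r_k$. Because $\dist(x_k,\partial\Omega)/r_k$ stays bounded, the $\Omega_k$ converge (locally, in a suitable sense) to a half-space $\mathbb{H} \subset \R{2m}$ whose boundary is at distance $d$ from the origin, rather than to all of $\R{2m}$. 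The rescaled functions $\eta_k$ satisfy the Navier-type (here Dirichlet) boundary conditions $\eta_k = \partial_\nu \eta_k = \cdots = \partial_\nu^{m-1}\eta_k = \text{(const)}$ on $\partial\Omega_k$ inherited from \eqref{eq0}, and they solve $(-\Delta)^m \eta_k = $ (something like) $\frac{u_k(x_k+r_k x)}{u_k(x_k)} \lambda_k r_k^{2m} u_k^2(x_k) e^{m(u_k^2(x_k+r_k x) - u_k^2(x_k))}$, where the normalization \eqref{rik} makes the constant in front equal to $2^{2m}(2m-1)!$ and the exponential is controlled near $x = 0$.

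The key step is then to establish enough compactness for $\eta_k$ to pass to the limit and obtain a solution $\eta$ of the higher-order Liouville equation $(-\Delta)^m \eta = (2m-1)! e^{2m\eta}$ on the half-space $\mathbb{H}$, with vanishing Dirichlet data $\eta = \partial_\nu \eta = \cdots = \partial_\nu^{m-1}\eta = 0$ on $\partial\mathbb{H}$, and with finite total curvature $\int_{\mathbb{H}} e^{2m\eta}\,dx \leq \Lambda_1/(2m-1)! < \infty$ (the finiteness coming from \eqref{eq1}). For the compactness I would invoke the gradient estimates of Lemma \ref{stimaBR} together with the Lorentz-space bounds of Lemma \ref{lorentz}, which give uniform local control on $\nabla^\ell \eta_k$ and hence, via elliptic regularity up to the flat boundary portion, $C^{2m-1}_{\loc}(\overline{\mathbb{H}})$ convergence. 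The contradiction comes from a Pohozaev-type (or Kelvin-transform / reflection) argument: a nonnegative solution of the Liouville equation on a half-space with all derivatives up to order $m-1$ vanishing on the flat boundary and with finite energy cannot exist — one can reflect (or directly run a Pohozaev identity on large half-balls) to show the energy must be zero, forcing $\eta \equiv -\infty$ or a contradiction with $\eta_k(0) = \log 2$, i.e. $\eta(0) = \log 2$ being a strict interior maximum value incompatible with the boundary constraint.

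The main obstacle I anticipate is the boundary analysis in the polyharmonic setting: unlike the second-order case, where one has a maximum principle and can reflect Dirichlet solutions cleanly across a hyperplane, for $(-\Delta)^m$ with $m \geq 2$ the reflection across $\partial\mathbb{H}$ does not produce a solution of the same equation, and there is no comparison principle to fall back on. I would handle this by deriving a Pohozaev identity directly on half-balls $B_R^+ \subset \mathbb{H}$ for the limiting equation, carefully tracking the boundary integrals over the flat part (which vanish because $\eta$ and its normal derivatives up to order $m-1$ vanish there) and over the spherical part (which are controlled by the finite-energy and gradient bounds and tend to zero as $R \to \infty$); this forces $\int_{\mathbb{H}} e^{2m\eta}\,dx = 0$, the desired contradiction. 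A secondary technical point is making precise the convergence of the domains $\Omega_k$ to $\mathbb{H}$ and the validity of boundary elliptic estimates uniform in $k$, for which the smoothness of $\partial\Omega$ and standard rescaling arguments suffice. This is essentially the higher-order analogue of Lemma in \cite{RS} for $m=2$, and of the corresponding step in \cite{AS} for $m=1$.
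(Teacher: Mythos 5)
There is a genuine gap, and it lies at the very start of your scaling choice. You rescale via $\eta_k(x) = u_k(x_k)\big(u_k(x_k+r_kx)-u_k(x_k)\big)+\log 2$ and assert that on $\partial\Omega_k$ this satisfies vanishing Dirichlet data. But since $u_k\equiv 0$ on $\partial\Omega$, on $\partial\Omega_k$ one in fact has
$$\eta_k = u_k(x_k)\big(0 - u_k(x_k)\big) + \log 2 = -u_k^2(x_k) + \log 2 \;\longrightarrow\; -\infty.$$
In the contradiction scenario $\dist(x_k,\partial\Omega)/r_k\to d<\infty$, the flat boundary piece sits at bounded distance from the origin in the rescaled picture, while $\eta_k(0)=\log 2$. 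So the oscillation of $\eta_k$ on a compact set containing part of the boundary diverges, and no uniform $C^{2m-1}_{\loc}(\overline{\mathbb H})$ bound is available. The Liouville-on-a-half-space limit problem you propose (with zero Dirichlet data and finite total curvature) is therefore not the correct limit, and the Pohozaev/reflection endgame you designed to rule it out never becomes applicable. The technical worry you flagged (absence of reflection/comparison principles for $(-\Delta)^m$, $m\ge 2$) is real, but it is downstream of the more basic fact that this scaling cannot see the boundary in a controlled way.

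The paper's proof avoids all of this by using the much milder normalization $\overline u_k(x) := u_k(x_k+r_kx)/u_k(x_k)$, which \emph{does} inherit zero Dirichlet data on $\partial\Omega_k$, satisfies $0\le \overline u_k\le 1$, and solves $(-\Delta)^m\overline u_k = O\big(u_k(x_k)^{-2}\big)\to 0$ in $L^\infty$ because of \eqref{rik} and the fact that $\overline u_k^2-1\le 0$. One then gets $C^{2m-1,\alpha}_{\loc}$ convergence to a polyharmonic $\overline u$ on the half-space $\mathcal P$ with $\overline u(0)=1$ and zero Dirichlet data, and the bound \eqref{Hm} combined with the Sobolev embedding $H^{m-1}\hookrightarrow L^{2m}$ forces $\int_{\Omega_k}|\nabla\overline u_k|^{2m}\,dx\to 0$, hence $\nabla\overline u\equiv 0$ and $\overline u\equiv 0$, contradicting $\overline u(0)=1$. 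This is entirely elementary — no Liouville equation, no Pohozaev identity, no reflection — and sidesteps the polyharmonic boundary subtleties you correctly identified as hard. If you want to rescue your approach, switch to $\overline u_k$; the ingredients you cite (Lemmas \ref{stimaBR} and \ref{lorentz}) are not needed for this lemma, only for the later interior blow-up analysis.
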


\begin{proof} Set 
$$\overline u_k(x):= \frac{u_k(r_kx+x_k)}{u_k(x_k)} \quad \textrm{for }x\in \Omega_k:=\{r_k^{-1}(x-x_k):x\in\Omega\}.$$
Then $\overline u_k$ satisfies
$$
\left\{
\begin{array}{ll}
\displaystyle (-\Delta)^m \overline u_k=\frac{2^{2m}(2m-1)!}{u_k^2(x_k)}\overline u_k e^{m u_k^2(x_k)(\overline u_k^2-1)} & \textrm{in }\Omega_k\\
\displaystyle \overline u_k>0 & \textrm{in }\Omega_k\\
\displaystyle \overline u_k=\partial_\nu \overline u_k=\ldots =\partial_\nu^{m-1}\overline u_k =0& \textrm{on }\partial \Omega_k.
\end{array}
\right.
$$
Assume for the sake of contradiction that up to a subsequence we have $$\lim_{k\to\infty}\frac{\dist(x_k,\partial\Omega)}{r_k}=R_0<+\infty.$$
Then, passing to a further subsequence, $\Omega_k\to \mathcal{P}$, where $\mathcal{P}$ is a half-space. Since
$$\|\Delta^m \overline u_k\|_{L^\infty(\Omega_k)}\leq \frac{C}{u_k^2(x_k)}\to 0\quad \text{as }k\to\infty,$$
we see that, up to a subsequence, $\overline u_k\to\overline u$ in $C^{2m-1,\alpha}_{\loc}(\overline{\mathcal{P}})$, where
$$\overline u(0)=\overline u_k(0)=1$$
and
$$
\left\{
\begin{array}{ll}
\displaystyle (-\Delta)^m \overline u=0 & \textrm{in }\mathcal{P}\\
\displaystyle \overline u=\partial_\nu \overline u=\ldots =\partial_\nu^{m-1}\overline u=0 & \textrm{on }\partial \mathcal{P}.
\end{array}
\right.
$$
By \eqref{Hm} and the Sobolev imbedding $H^{m-1}(\Omega)\hookrightarrow L^{2m}(\Omega)$, we find
$$\int_{\Omega_k} |\nabla \overline u_k|^{2m}dx =\frac{1}{u_k(x_k)^{2m}}\int_\Omega|\nabla u_k|^{2m}dx\leq \frac{C}{u_k(x_k)^{2m}}\to 0, \quad \text{as }k\to\infty.$$
Then $\nabla \overline u\equiv 0$, hence $\overline u\equiv const=0$ thanks to the boundary condition. That contradicts $\overline u(0)=1$.
\end{proof}

\begin{lemma}\label{lemmauk} We have
\begin{equation}\label{wk}
u_k(x_k+r_kx)-u_k(x_k)\to 0 \quad \textrm{ in }C^{2m-1}_{\loc}(\R{2m})\textrm{ as } k\to\infty.
\end{equation}
\end{lemma}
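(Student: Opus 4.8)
The plan is to obtain \eqref{wk} from the Green representation of $u_k$ on $\Omega$, exploiting that $x_k$ is the \emph{global} maximum of $u_k$ in order to control the rescaled right-hand side. Set $f_k:=\lambda_k u_k e^{mu_k^2}\ge 0$, so $(-\Delta)^m u_k=f_k$ with clamped boundary conditions. Splitting $\Omega$ into $\{u_k<1\}$ and $\{u_k\ge1\}$ and using $\lambda_k\to0$ together with \eqref{eq1}, one gets $\|f_k\|_{L^1(\Omega)}\le C$. Denoting by $G_\Omega$ the Green function of $(-\Delta)^m$ on $\Omega$ under the clamped boundary conditions, $u_k(x)=\int_\Omega G_\Omega(x,y)f_k(y)\,dy$, hence
$$w_k(x):=u_k(x_k+r_kx)-u_k(x_k)=\int_\Omega\big[G_\Omega(x_k+r_kx,y)-G_\Omega(x_k,y)\big]f_k(y)\,dy .$$

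Next I would split off the singularity, $G_\Omega(p,y)=\gamma_m\log\frac{1}{|p-y|}+H(p,y)$, with $\gamma_m>0$ a dimensional constant and $H(\cdot,y)$ the ($(-\Delta)^m$-harmonic, hence smooth) regular part. The contribution of $H$ to $w_k(x)$ is bounded, by the mean value theorem, by $C\,r_k|x|\,\|f_k\|_{L^1(\Omega)}\sup_y\|\nabla_p H(\cdot,y)\|_\infty\to0$. For the logarithmic part, the substitution $y=x_k+r_kz$ turns it into $\gamma_m\int_{\Omega_k}\log\frac{|z|}{|z-x|}\,\mu_k(z)\,dz$, where $\Omega_k:=r_k^{-1}(\Omega-x_k)$ and $\mu_k(z):=r_k^{2m}f_k(x_k+r_kz)=(-\Delta)^m w_k(z)$. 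The crucial point is that, by \eqref{rik} and because $u_k(x_k+r_kz)\le u_k(x_k)=\max_\Omega u_k$,
$$0\le\mu_k(z)=\frac{2^{2m}(2m-1)!}{u_k^2(x_k)}\,u_k(x_k+r_kz)\,e^{m(u_k^2(x_k+r_kz)-u_k^2(x_k))}\le\frac{2^{2m}(2m-1)!}{u_k(x_k)}=:\ve_k\to0 ,$$
while $\|\mu_k\|_{L^1(\Omega_k)}=\|f_k\|_{L^1(\Omega)}\le C$. Fixing $x$ with $|x|\le M$, I would then estimate the logarithmic integral by splitting $\Omega_k$ into $\{|z|\le 2M\}$, where the kernel $\log\frac{|z|}{|z-x|}$ is integrable and $\mu_k$ is uniformly small, so that this piece is $\le C(M)\ve_k$, and $\{|z|>2M\}$, where $\big|\log\frac{|z|}{|z-x|}\big|\le 2M/|z|$ and a further splitting at a large radius $T$ (using $\|\mu_k\|_{L^1(\Omega_k)}\le C$ for $|z|\ge T$ and $\|\mu_k\|_{L^\infty(\Omega_k)}\le\ve_k$ for $2M<|z|<T$) gives a bound $\le CM/T+C\ve_k T^{2m}$. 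Choosing first $T$ large and then $k$ large, one concludes $w_k\to0$ locally uniformly on $\R{2m}$.

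The last step is a bootstrap: on any ball $B_{2R}$ (contained in $\Omega_k$ for $k$ large, by Lemma \ref{boundary}) we have $(-\Delta)^m w_k=\mu_k$ with $\|\mu_k\|_{L^\infty(B_{2R})}\le\ve_k\to0$ and now $\|w_k\|_{L^\infty(B_{2R})}\to0$; interior elliptic estimates give $\|w_k\|_{W^{2m,p}(B_R)}\to0$ for every $p<\infty$, and the Sobolev embedding (with $p>2m$) yields $w_k\to0$ in $C^{2m-1,\alpha}(B_R)$. Since $R$ is arbitrary, this proves \eqref{wk}. (As a byproduct $\overline u_k=1+w_k/u_k(x_k)\to1$ in $C^{2m-1}_{\loc}$, which will be used in the sequel.)

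The step I expect to be the main obstacle is the control of the regular part $H$ of $G_\Omega$: the bound $\sup_y\|\nabla_p H(\cdot,y)\|_\infty\le C$ used above is legitimate only when the base points $x_k,\,x_k+r_kx$ and the variable $y$ stay at a definite distance from $\partial\Omega$, whereas Lemma \ref{boundary} only guarantees $\dist(x_k,\partial\Omega)/r_k\to\infty$. One therefore has to isolate the contribution of $y$ near $\partial\Omega$ (and of $x_k$ itself, should it approach $\partial\Omega$) and estimate it separately, using the boundedness of $G_\Omega(x_k,\cdot)$ away from $x_k$ together with the smallness of $f_k$ near $\partial\Omega$ (where $u_k$ and its derivatives up to order $m-1$ vanish); this in turn relies on the sharp pointwise and gradient estimates for the clamped polyharmonic Green function in dimension $2m$.
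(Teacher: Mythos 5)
Your approach is genuinely different from the paper's. You represent $u_k$ via the Green function $G_\Omega$ of the clamped polyharmonic operator, split $G_\Omega$ into logarithmic and regular parts, and exploit the cancellation of the $\log r_k$ factor in the difference $G_\Omega(x_k+r_kx,y)-G_\Omega(x_k,y)$ together with the key pointwise bound $\mu_k\leq\varepsilon_k=C/u_k(x_k)\to0$ (which, like the paper, ultimately uses that $x_k$ is a global maximum of $u_k$). The paper instead argues purely locally in the rescaled variable: on each $B_R$ it writes $v_k=h_k+w_k$ with $\Delta^m h_k=0$ and $w_k$ solving a Navier problem with right-hand side $\Delta^m v_k\to0$, controls $h_k$ from above by $h_k\leq -w_k\leq C$ and at the origin, then uses Pizzetti's formula plus interior estimates for polyharmonic functions (Proposition~\ref{c2m}) to get $C^\ell_{\loc}$ bounds, and finally identifies the limit as $0$ by combining the Liouville-type Theorem~\ref{trmliou} with the bound $\|\nabla^2 v_k\|_{L^m(\Omega_k)}\leq C$ coming from $\|u_k\|_{H^m}\leq C$ and the Sobolev embedding $H^{m-2}\hookrightarrow L^m$. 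The paper's route thus never touches the Green function of $\Omega$ and requires no information about where $x_k$ sits relative to $\partial\Omega$ beyond Lemma~\ref{boundary}.

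The obstacle you flag — the regular part $H$ — is a genuine gap, and I do not think the sketch you give resolves it. The bound $\sup_y\|\nabla_p H(\cdot,y)\|_\infty\leq C$ fails as $y\to\partial\Omega$ (the boundary data $\partial_\nu^j\log\frac1{|\cdot-y|}$ blow up), and the sharp uniform estimate that \emph{is} available, $|\nabla_p G_\Omega(p,y)|\leq C|p-y|^{-1}$ from \cite{DAS}, gives only $|\nabla_p H(p,y)|\leq C|p-y|^{-1}$, which is useless in the near region $|y-x_k|\lesssim r_kM$. More seriously, nothing in the hypotheses prevents $x_k\to\partial\Omega$; Lemma~\ref{boundary} only gives $\dist(x_k,\partial\Omega)/r_k\to\infty$, which is not a priori enough to control the $H$-contribution in the near region (a natural scaling bound on $\nabla_p H$ produces a factor like $|\log\dist(x_k,\partial\Omega)|/\dist(x_k,\partial\Omega)$, and $r_k$ times this need not go to zero). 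The proposed patch — ``smallness of $f_k$ near $\partial\Omega$ because $u_k$ vanishes there to order $m$'' — is qualitative, not quantified, and does not address the scenario $x_k\to\partial\Omega$. To make your route rigorous one would either have to prove a priori that $\dist(x_k,\partial\Omega)$ stays bounded away from zero (not obvious), or establish sharp weighted estimates on $\nabla_p H$ near $\partial\Omega$; both seem harder than the local argument the paper uses, which sidesteps all Green-function boundary behavior.
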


\begin{proof} Set
$$ v_k(x):=u_k(x_k+r_k x)-u_k(x_k),\quad x\in \Omega_k $$
Then $v_k$ solves
\begin{equation}\label{eqwk}
(-\Delta)^m v_k =2^{2m}(2m-1)!\frac{\overline u_k(x)}{u_k(x_k)}e^{mu_k^2(x_k)(\overline u_k^2-1)}\leq 2^{2m}\frac{(2m-1)!}{u_k(x_k)}\to 0.
\end{equation}
Assume that $m>1$. By \eqref{Hm} and the Sobolev embedding $H^{m-2}(\Omega)\hookrightarrow L^m(\Omega)$, we get
\begin{equation}\label{D2vk}
\|\nabla^2 v_k\|_{L^m(\Omega_k)}=\|\nabla^2 u_k\|_{L^m(\Omega)}\leq C.
\end{equation}
Fix now $R>0$ and write $v_k=h_k+w_k$ on $B_R=B_R(0)$, where $\Delta^m h_k=0$ and $w_k$ satisfies the Navier-boundary condition on $B_R$. Then, \eqref{eqwk} gives

\begin{equation}\label{wk0}
w_k\to 0 \quad \textrm{in } C^{2m-1,\alpha}(B_R).
\end{equation}
This, together with \eqref{D2vk} implies
\begin{equation}\label{hk0}
\|\Delta h_k\|_{L^m(B_R)}\leq C.
\end{equation}
Then, since $\Delta^{m-1}(\Delta h_k)=0$, we get from Proposition \ref{c2m}
\begin{equation}\label{hk2}
\|\Delta h_k\|_{C^\ell(B_{R/2})}\leq C(\ell) \quad\textrm{for every }\ell\in\mathbb{N}.
\end{equation}
By Pizzetti's formula \eqref{pizzetti},
$$\Intm_{B_R}h_k dx=h_k(0)+\sum_{i=1}^{m-1}c_iR^{2i}\Delta^i h_k(0),$$
and \eqref{hk2}, together with $|h_k(0)|=|w_k(0)|\leq C$ and $h_k\leq -w_k\leq C$, we find
$$\Intm_{B_R}|h_k|dx\leq C.$$
Again by Proposition \ref{c2m} it follows that
\begin{equation}\label{hk3}
\|h_k\|_{C^\ell(B_{R/2})}\leq C(\ell) \quad\textrm{for every }\ell\in\mathbb{N}.
\end{equation}
By Ascoli-Arzel\`a's theorem, \eqref{wk0} and \eqref{hk3}, we have that up to a subsequence
$$v_k\to v\quad\textrm{in }C^{2m-1,\alpha}(B_{R/2}),$$
where $\Delta^m v\equiv 0$ thanks to \eqref{eqwk}.
We can now apply the above procedure with a sequence of radii $R_k\to \infty$, extract a diagonal subsequence $(v_{k'})$, and find a function $v\in C^\infty(\R{2m})$ such that
\begin{equation}\label{55'}
v\leq 0,\quad \Delta^m v\equiv 0, \quad v_{k'}\to v\quad\textrm{in }C^{2m-1,\alpha}_{\loc}(\R{2m}).
\end{equation}
By Fatou's Lemma
\begin{equation}\label{55*}
\|\nabla^2 v\|_{L^m(\R{2m})}\leq \liminf_{k\to\infty}\|\nabla^2 v_{k'}\|_{ L^m(\Omega_k)}\leq C.
\end{equation}
By Theorem \ref{trmliou} and \eqref{55'}, $v$ is a polynomial of degree at most $2m-2$. Then \eqref{55'} and \eqref{55*} imply that $v$ is constant, hence $v\equiv v(0)=0$. Therefore the limit does not depend on the chosen subsequence $(v_{k'})$, and the full sequence $(v_k)$ converges to $0$ in $C^{2m-1}_{\loc}(\R{2m})$, as claimed.

When $m=1$, Pizzetti's formula and \eqref{eqwk} imply at once that, for every $R>0$, $\|v_k\|_{L^1(B_R)}\to 0$, hence  $v_k\to 0$ in $W^{2,p}(B_{R/2})$ as $k\to \infty$, $1\leq p<\infty$. 
\end{proof}

Now set
\begin{equation}\label{defetak}
\eta_k(x):=u_k(x_k)[u_k(r_kx +x_k)-u_k(x_k)]+\log 2\leq \log 2.
\end{equation}
An immediate consequence of
Lemma \ref{lemmauk} is the following

\begin{cor}\label{coretak} The function $\eta_k$ satisfies
\begin{equation}\label{deltaeta}
(-\Delta)^m \eta_k =V_k e^{2m a_k \eta_k},
\end{equation}
where
$$V_k(x)= 2^{m(1-\overline u_k)}(2m-1)! \overline u_k(x)\to (2m-1)!,\quad a_k=\frac{1}{2}(\overline u_k+1)\to 1$$
in $C^0_{\loc}(\R{2m})$.
\end{cor}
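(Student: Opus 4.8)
The plan is to reduce the statement to the scaling identity already obtained inside the proof of Lemma~\ref{lemmauk}. Since $u_k(x_k)$ and $\log 2$ do not depend on $x$, applying $(-\Delta)^m$ (in the $x$ variable) to the definition \eqref{defetak} of $\eta_k$ and multiplying \eqref{eqwk} by $u_k(x_k)$ gives at once
\[
(-\Delta)^m\eta_k=u_k(x_k)\,(-\Delta)^m v_k=2^{2m}(2m-1)!\,\overline u_k(x)\,e^{mu_k^2(x_k)(\overline u_k^2-1)}.
\]
(Alternatively one recomputes this directly: $(-\Delta)^m v_k(x)=r_k^{2m}\big((-\Delta)^m u_k\big)(x_k+r_kx)$ by scaling, then one substitutes \eqref{eq0}, the relation $u_k(x_k+r_kx)=u_k(x_k)\overline u_k(x)$, and the normalization \eqref{rik}.) So the first step is simply to record this identity.

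The second, and only nontrivial, step is to rewrite its right-hand side in the exponential form $V_k e^{2m a_k\eta_k}$. Here I would first note that \eqref{defetak} can be rewritten as $\eta_k=u_k^2(x_k)(\overline u_k-1)+\log 2$. With $a_k=\tfrac12(\overline u_k+1)$ and the elementary factorization $(\overline u_k+1)(\overline u_k-1)=\overline u_k^2-1$, this yields $2m a_k\eta_k=mu_k^2(x_k)(\overline u_k^2-1)+m(\overline u_k+1)\log 2$, hence $e^{2m a_k\eta_k}=2^{m(\overline u_k+1)}e^{mu_k^2(x_k)(\overline u_k^2-1)}$. Comparing with the identity of the first step forces $V_k=2^{2m-m(\overline u_k+1)}(2m-1)!\,\overline u_k=2^{m(1-\overline u_k)}(2m-1)!\,\overline u_k$, which is precisely the claimed expression, and establishes \eqref{deltaeta}. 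The choice of the ``linearization exponent'' $a_k$ is dictated exactly by the requirement that $2m a_k(\overline u_k-1)=m(\overline u_k^2-1)$.

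It remains to prove the convergences. By Lemma~\ref{lemmauk}, $v_k:=u_k(x_k+r_kx)-u_k(x_k)\to 0$ in $C^{2m-1}_{\loc}(\R{2m})$; since we are in the case $\sup_\Omega u_k\to\infty$ and $x_k$ realizes the maximum, $u_k(x_k)\to\infty$, so $\overline u_k-1=v_k/u_k(x_k)\to 0$ in $C^0_{\loc}(\R{2m})$. Consequently $a_k=\tfrac12(\overline u_k+1)\to 1$, and since $t\mapsto 2^{m(1-t)}$ is continuous, also $V_k=2^{m(1-\overline u_k)}(2m-1)!\,\overline u_k\to(2m-1)!$, both locally uniformly. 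The whole argument is a computation; no serious obstacle is expected beyond keeping track of the algebra in the second step.
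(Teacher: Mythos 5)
Your proof is correct and fills in the computation that the paper dismisses as ``an immediate consequence of Lemma~\ref{lemmauk}'': you obtain $(-\Delta)^m\eta_k$ by scaling, rewrite $\eta_k=u_k^2(x_k)(\overline u_k-1)+\log 2$, and use the factorization $\overline u_k^2-1=(\overline u_k+1)(\overline u_k-1)$ together with $u_k(x_k)\to\infty$ to identify $V_k$, $a_k$, and their limits. This is exactly the intended argument; nothing is missing.
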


\begin{lemma}\label{lorentz} For every $1\leq \ell\leq 2m-1$, $\nabla^\ell u_k$ belongs to the Lorentz space $L^{(2m/\ell,2)}(\Omega)$ and
\begin{equation}\label{stimalorentz}
\|\nabla^\ell u_k\|_{(2m/\ell,2)}\leq C.
\end{equation}
\end{lemma}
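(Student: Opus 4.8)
The plan is to extract from equations \eqref{eq0} and \eqref{eq1} the optimal integrability of $\Delta^m u_k$, namely a uniform bound in the Zygmund space $L(\log L)^{1/2}(\Omega)$, and then to feed this into a sharp elliptic interpolation estimate to obtain the Lorentz bound on all lower-order derivatives. First I would observe that, writing $f_k := \lambda_k u_k e^{m u_k^2} = (-\Delta)^m u_k$, the energy bound \eqref{eq1} states that $\int_\Omega u_k f_k\,dx \leq C$. On the region where $u_k \leq 1$ we trivially have $\int f_k \leq C$ from the already-established $L^1$ bound $\|\Delta^m u_k\|_{L^1(\Omega)} \leq C$ (a consequence of \eqref{Hm} plus elliptic estimates, or directly of \eqref{eq1}). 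On the region $\{u_k > 1\}$, I would use that $\log(e + f_k) \leq C(1 + u_k^2 + \log u_k + |\log \lambda_k|)$; since $\lambda_k \to 0^+$, the term $|\log\lambda_k|$ is controlled on the set where $f_k$ is large (there $u_k^2$ must be comparably large to compensate $\lambda_k$), and $\log u_k \leq u_k^2$, so that $f_k \log^{1/2}(e + f_k) \leq C\, u_k f_k + C f_k$ pointwise. Integrating gives $\int_\Omega f_k \log^{1/2}(e + f_k)\,dx \leq C$, i.e. a uniform bound for $\Delta^m u_k$ in $L(\log L)^{1/2}(\Omega)$.

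Next I would invoke the interpolation/regularity result of \cite{BS} (as flagged in the introduction): if $\Delta^m u_k$ is bounded in $L(\log L)^{1/2}$ and $u_k$ satisfies the Dirichlet boundary conditions in \eqref{eq0}, then for each $1 \leq \ell \leq 2m-1$ the iterated Riesz-potential representation $u_k = G * f_k$ (with $G$ the polyharmonic Green kernel of $\Omega$) gives $\nabla^\ell u_k = K_\ell * f_k$ where $K_\ell$ is a kernel of order $2m - \ell$, i.e. $|K_\ell(x,y)| \leq C|x-y|^{\ell - 2m}$. The convolution with $|x|^{\ell-2m}$ maps $L^1(\Omega)$ into weak-$L^{2m/(2m-\ell)'} = L^{(2m/\ell,\infty)}$; the logarithmic gain $L(\log L)^{1/2}$ over $L^1$ upgrades the target Lorentz exponent from $\infty$ to $2$, yielding $\nabla^\ell u_k \in L^{(2m/\ell, 2)}(\Omega)$ with norm bounded by $\|f_k\|_{L(\log L)^{1/2}} \leq C$. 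This is exactly the sharp endpoint: $L^1 \to L^{(2m/\ell,\infty)}$ and $L\log L \to L^{2m/\ell}$, interpolated at the half-power $L(\log L)^{1/2} \to L^{(2m/\ell,2)}$.

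I expect the main obstacle to be the boundary: the sharp Lorentz estimate for Riesz potentials is classical on $\R{2m}$, but here $u_k$ lives on the bounded domain $\Omega$ with the full Dirichlet data $u_k = \partial_\nu u_k = \dots = \partial_\nu^{m-1} u_k = 0$, so one must work with the Green function of $(-\Delta)^m$ on $\Omega$ rather than the free kernel. The off-diagonal estimates $|\nabla_x^\ell G(x,y)| \leq C|x-y|^{\ell-2m}$ do hold on smooth domains by the Agmon–Douglis–Nirenberg theory \cite{ADN}, uniformly up to the boundary, so this is a technical rather than conceptual difficulty; I would cite \cite{ADN} and \cite{BS} and reduce \eqref{stimalorentz} to the model convolution inequality on $\Rn$ via these Green-function bounds. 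A secondary point requiring care is making the pointwise inequality $f_k \log^{1/2}(e+f_k) \leq C(u_k f_k + f_k)$ genuinely uniform in $k$ despite $\lambda_k \to 0$: one splits $\Omega$ according to whether $f_k \leq 1$ or $f_k > 1$, and on the latter set uses $m u_k^2 \geq \log f_k - \log(\lambda_k u_k) \geq \log f_k$ once $f_k$ is large enough, absorbing the $\lambda_k$ into a fixed constant since $\lambda_k$ is bounded. Everything else is routine.
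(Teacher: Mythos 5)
Your proposal reproduces the paper's argument: establish a uniform bound for $\Delta^m u_k$ in the Zygmund space $L(\log L)^{1/2}(\Omega)$ by exploiting the pointwise logarithmic structure of $\lambda_k u_k e^{m u_k^2}$ together with the energy bound \eqref{eq1}, and then invoke the Lorentz-space elliptic estimate of Theorem~\ref{LlogL} in the appendix (which rests on \cite[Cor.~6.16]{BS}, Lorentz interpolation for the Green operator, and \cite{ADN} for the boundary-value problem) to conclude $\|\nabla^\ell u_k\|_{(2m/\ell,2)} \leq C$. Two small points: since $\lambda_k\to 0^+$, for large $k$ one has $\log^+\lambda_k = 0$, so the troublesome $|\log\lambda_k|$ term never actually appears and the Zygmund bound is obtained more directly than your discussion suggests; and the $\alpha=1$ endpoint of the scale is $L\log L \to L^{(2m/\ell,1)}$ rather than $L^{2m/\ell}$ (see the remark following the lemma in the paper), though your interpolated conclusion at $\alpha=1/2$ is correctly $L^{(2m/\ell,2)}$.
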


\begin{proof} We first show that $f_k:=(-\Delta)^m u_k$ is bounded in $L (\log L)^\frac{1}{2}(\Omega)$, where
$$L (\log L)^\alpha(\Omega):=\bigg\{f\in L^1(\Omega):\|f\|_{L(\log L)^\alpha}:=\int_\Omega |f|\log^\alpha (2+|f|)dx<\infty \bigg\}.$$
Indeed, set $\log^+ t:=\max\{0,\log t\}$ for $t>0$. Then, using the simple inequalities
$$\log(2+t)\leq 2+\log^+ t,\quad \log^+(ts)\leq \log^+ t+\log^+ s,\quad t,s>0,$$
one gets
$$\log(2+\lambda_ku_k e^{mu_k^2})\leq 2+\log^+\lambda_k+\log^+ u_k+mu_k^2\leq C(1+u_k)^2.$$
Then, since $f_k\geq 0$, we have
\begin{eqnarray*}
\|f_k\|_{L(\log L)^\frac{1}{2}}&\leq&\int_\Omega f_k\log^\frac{1}{2}(2+f_k)dx\\
&\leq& C\int_{\{x\in \Omega:u_k(x)\geq 1\}} \lambda_k u_k^2 e^{mu_k}dx +C|\Omega|\leq C
\end{eqnarray*}
by \eqref{eq1}, as claimed. Now \eqref{stimalorentz} follows from Theorem \ref{LlogL}. 
\end{proof}

\begin{rmk} The inequality \eqref{stimalorentz} is intermediate between the $L^1$ and the $L \log L$ estimates.
Indeed, the bound of $f_k:=(-\Delta)^m u_k$ in $L^1$ implies $\|\nabla^\ell u_k\|_{L^p}\leq C$ for every $1\leq \ell\leq 2m-1$, $1\leq p <\frac{2m}{\ell}$, and actually $\|\nabla^\ell u_k\|_{(2m/\ell,\infty)}\leq C$ (compare \cite[Thm. 3.3.6]{Hel}), but that is not enough for our purposes (Lemma \ref{stimaBR} below). On the other hand, was $f_k$ bounded in $L (\log L)$, we would have $\|\nabla^\ell u_k\|_{(2m/\ell,1)}\leq C$, which implies $\|u_k\|_{L^\infty}\leq C$ (compare \cite[Thm. 3.3.8]{Hel}). But we know that this is not the case in general.

Actually, the cases $1\leq \ell\leq m$ in \eqref{stimalorentz} follow already from \eqref{Hm} and the improved Sobolev embeddings, see \cite{ON}. What really matters here are the cases $m< \ell<2m$. In fact, when $m=1$ Lemma \ref{lorentz} reduces to \eqref{Hm}.
\end{rmk}

The following lemma replaces and sharpens Proposition 2.3 in \cite{RS}.

\begin{lemma}\label{stimaBR} For any $R>0$, $1\leq \ell\leq 2m-1$ there exists $k_0=k_0(R)$ such that
$$u_k(x_k)\int_{B_{Rr_k}(x_k)}|\nabla^\ell u_k|dx\leq C (Rr_k)^{2m-\ell},\quad \textrm{for all }k\geq k_0. $$
\end{lemma}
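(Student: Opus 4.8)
The strategy is to rescale the integral to the blow-up variable, split the estimate according to the size of $\ell$ relative to $m$, and use the Lorentz bound of Lemma \ref{lorentz} together with the convergence of the rescaled functions near $x_k$. First I would change variables $x = x_k + r_k y$ and write, with $v_k(y) = u_k(x_k + r_k y) - u_k(x_k)$,
$$u_k(x_k)\int_{B_{Rr_k}(x_k)}|\nabla^\ell u_k|\,dx = u_k(x_k)\, r_k^{2m-\ell}\int_{B_R}|\nabla^\ell v_k|\,dy,$$
so the claim is equivalent to $u_k(x_k)\int_{B_R}|\nabla^\ell v_k|\,dy \le C R^{2m-\ell}$ for $k$ large. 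The issue is that $v_k \to 0$ in $C^{2m-1}_{\loc}$ by Lemma \ref{lemmauk}, so $\int_{B_R}|\nabla^\ell v_k|\,dy \to 0$, but $u_k(x_k) \to \infty$, and one must show the decay of the integral beats the blow-up of $u_k(x_k)$.

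The key is to estimate the integral of $|\nabla^\ell v_k|$ over $B_R$ quantitatively using the Lorentz bound. By Lemma \ref{lorentz}, $\nabla^\ell u_k$ is bounded in $L^{(2m/\ell,2)}(\Omega)$; rescaling, $\nabla^\ell v_k$ is bounded in $L^{(2m/\ell,2)}(\Omega_k)$ with the \emph{same} constant, since the Lorentz quasi-norm $L^{(p,q)}$ with $p = 2m/\ell$ is invariant under the scaling $v_k(y) = r_k^{\ell}\cdot(\text{rescaling of } u_k)$ precisely because the homogeneity $2m/\ell \cdot \ell = 2m$ matches the dimension; in other words this $\ell$ is the scaling-critical exponent. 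Then Hölder's inequality in Lorentz spaces gives
$$\int_{B_R}|\nabla^\ell v_k|\,dy = \|\nabla^\ell v_k\|_{L^1(B_R)} \le C\,\|\nabla^\ell v_k\|_{L^{(2m/\ell,2)}(B_R)}\,\|\mathbf{1}_{B_R}\|_{L^{(2m/(2m-\ell),2')}(B_R)} \le C\, R^{2m-\ell}.$$
This already controls the integral by $C R^{2m-\ell}$ \emph{without} the factor $u_k(x_k)$; to recover the extra factor I would instead exploit that $\nabla^\ell v_k \to 0$ in $L^{(2m/\ell,2)}(B_R)$ — which follows from the $C^{2m-1}_{\loc}$ convergence to zero combined with the uniform Lorentz bound and an interpolation/dominated-convergence argument — so that for any fixed $\varepsilon > 0$ and $k$ large, $\|\nabla^\ell v_k\|_{L^{(2m/\ell,2)}(B_R)} \le \varepsilon / u_k(x_k)$ is \emph{false} in general; the honest route is: prove the Lorentz norm of $\nabla^\ell v_k$ on $B_R$ decays like $o(1/u_k(x_k))$ by going through the equation.

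Concretely, I would iterate a potential-estimate / Green's representation argument on $v_k$ on $B_{2R}$: using $(-\Delta)^m v_k = V_k e^{2m a_k \eta_k}/u_k(x_k) \cdot (\text{bounded})$ as in \eqref{eqwk}, which is bounded by $C/u_k(x_k)$ pointwise, the harmonic part contributes (via the Green kernel and the $C^{2m-1}$ boundedness of $v_k$ on $B_{2R}$, already established in the proof of Lemma \ref{lemmauk} with explicit estimates \eqref{hk2}--\eqref{hk3}) a quantity $\le C(R)/u_k(x_k)$ in $C^{\ell}(B_R)$, and hence $\int_{B_R}|\nabla^\ell v_k|\,dy \le C(R)/u_k(x_k)$. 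Multiplying by $u_k(x_k)$ and then by $r_k^{2m-\ell}$ after undoing the change of variables yields the claim — but one must be careful that the constant is uniform in $R$ up to the stated power of $R$, which requires tracking the $R$-dependence in Proposition \ref{c2m} and in Pizzetti's formula. \textbf{The main obstacle} is precisely this: obtaining the bound $\int_{B_R}|\nabla^\ell v_k|\,dy \le C R^{2m-\ell}/u_k(x_k)$ with the correct power $R^{2m-\ell}$ \emph{and} the correct factor $1/u_k(x_k)$ simultaneously, rather than just one or the other; this forces one to use the Lorentz estimate (for the $R$-power, since cruder $L^p$ bounds lose a logarithm or the sharp power) and the equation/convergence (for the $1/u_k(x_k)$ gain) in a coupled way, which is the technical heart of the argument and the reason Lemma \ref{lorentz} is needed in the sharp Lorentz — rather than merely $L^p$ — form.
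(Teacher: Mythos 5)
There is a genuine gap. Your plan correctly reduces to showing $\int_{B_R}|\nabla^\ell v_k|\,dy\le CR^{2m-\ell}/u_k(x_k)$, and you correctly observe that the scale-invariant Lorentz bound from Lemma \ref{lorentz} alone only gives $\int_{B_R}|\nabla^\ell v_k|\,dy\le CR^{2m-\ell}$, without the gain of $1/u_k(x_k)$. The fallback you propose — decompose $v_k=h_k+w_k$ on $B_{2R}$ and claim the polyharmonic part $h_k$ contributes $O(1/u_k(x_k))$ in $C^\ell(B_R)$ — is where the argument breaks. The estimates \eqref{hk2}--\eqref{hk3} established in the proof of Lemma \ref{lemmauk} only say $\|h_k\|_{C^\ell(B_{R/2})}\le C(\ell)$: they are uniform bounds, not decay at rate $1/u_k(x_k)$. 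The inhomogeneous piece $w_k$ is indeed $O(1/u_k(x_k))$ because $\Delta^m w_k=\Delta^m v_k=O(1/u_k(x_k))$, but $h_k=v_k-w_k$ carries the boundary data of $v_k$ on $\partial B_{2R}$, and nothing at this stage of the argument forces that to be small at rate $1/u_k(x_k)$. In fact the statement $v_k=O(1/u_k(x_k))$ in $C^{2m-1}_{\loc}$ is equivalent to $\eta_k=u_k(x_k)v_k+\log 2$ being bounded in $C^{2m-1}_{\loc}$, which is precisely the content of Proposition \ref{blowup} — and that proposition is proved \emph{using} Lemma \ref{stimaBR}. So this route is circular. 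Even granting the decay, tracking the $R$-dependence to get exactly $R^{2m-\ell}$ rather than $C(R)$ would remain an unresolved issue, as you yourself flag.

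The paper sidesteps both problems by working with $u_k^2$ on the \emph{original} domain $\Omega$ rather than rescaling. One first proves $\|\Delta^m(u_k^2)\|_{L^1(\Omega)}\le C$: expanding by Leibniz, the term $2u_k(-\Delta)^m u_k$ is controlled by \eqref{eq1}, and the cross terms $\nabla^j u_k\cdot\nabla^{2m-j}u_k$ are controlled by the Lorentz bounds of Lemma \ref{lorentz} combined with O'Neil's H\"older inequality in Lorentz spaces — this is where the sharp $L^{(2m/j,2)}$ estimates (rather than mere $L^p$, $p<2m/j$) are indispensable. Because $u_k$ (hence $u_k^2$) satisfies the full Dirichlet boundary condition, the Green's representation $u_k^2(x)=\int_\Omega G_x(y)f_k(y)\,dy$ is exact, with no polyharmonic remainder to estimate. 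The decay $|\nabla^\ell_x G_x(y)|\le C|x-y|^{-\ell}$ and Fubini then give $\int_{B_{Rr_k}(x_k)}|\nabla^\ell(u_k^2)|\,dx\le C(Rr_k)^{2m-\ell}$ directly, with the correct power of $R$ built into $\sup_y\int_{B_{Rr_k}(x_k)}|x-y|^{-\ell}dx$. Finally, $u_k(x_k)|\nabla^\ell u_k|\le C|\nabla^\ell(u_k^2)|+o(r_k^{-\ell})$ on $B_{Rr_k}(x_k)$ follows from the product rule and the $C^{2m-1}_{\loc}$ convergence in Lemma \ref{lemmauk}, and integrating gives the claim. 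The crucial structural point you are missing is that the object to estimate is not $v_k$ but $u_k^2$, whose Laplacian powers encode exactly the product $u_k(x_k)\nabla^\ell u_k$ needed (up to lower-order terms), and whose homogeneous Dirichlet data eliminates the uncontrollable harmonic part.
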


\begin{proof} We first claim that
\begin{equation}\label{Deltau2}
\|\Delta^m(u_k^2)\|_{L^1(\Omega)}\leq C.
\end{equation}
To see that, observe that
\begin{equation}\label{Deltau3}
|\Delta^m (u_k^2)|\leq 2u_k (-\Delta)^m u_k+C\sum_{\ell=1}^{2m-1}|\nabla^\ell u_k||\nabla^{2m-\ell}u_k|.
\end{equation}
The term $2u_k(-\Delta)^m u_k$ is bounded in $L^1$ thanks to  \eqref{eq1}. The other terms on the right-hand side of \eqref{Deltau3} are bounded in $L^1$ thanks to Lemma \ref{lorentz} and the H\"older-type inequality of O'Neil \cite{ON}.\footnote{If $\frac{1}{p}+\frac{1}{p'}=\frac{1}{q}+\frac{1}{q'}=1$, and $f\in L^{(p,q)}$, $g\in L^{(p',q')}$, then $\|fg\|_{L^1}\leq \|f\|_{(p,q)}\|g\|_{(p',q')}$.} Hence \eqref{Deltau2} is proven.

Now set $f_k:=(-\Delta)^m(u_k^2)$, and for any $x\in \Omega$, let $G_x$ be the Green's function for $(-\Delta)^m$ on $\Omega$ with Dirichlet boundary condition.
Then
$$u_k^2(x)=\int_{\Omega}G_x(y)f_k(y)dy.$$
Thanks to \cite[Thm. 12]{DAS}, $|\nabla^\ell G_x(y)|\leq C|x-y|^{-\ell}$, hence
$$|\nabla^\ell (u_k^2)(x)|\leq\int_\Omega |\nabla_x^\ell G_x(y)||f_k(y)|dy\leq C\int_{\Omega}\frac{|f_k(y)|}{|x-y|^\ell}dy.$$
Let $\mu_k$ denote the probability measure $\frac{|f_k(y)|}{\|f_k\|_{L^1(\Omega)}}dy$. By Fubini's theorem
\begin{eqnarray*}
\int_{B_{Rr_k}(x_k)}|\nabla^\ell (u_k^2)(x)|dx&\leq &C\|f_k\|_{L^1(\Omega)}\int_{B_{Rr_k}(x_k)} \int_{\Omega}\frac{1}{|x-y|^\ell}d\mu_k(y)dx\\
&\leq&C \int_\Omega  \int_{B_{Rr_k}(x_k)}\frac{1}{|x-y|^\ell}dxd\mu_k(y)\\
&\leq& C\sup_{y\in\Omega} \int_{B_{Rr_k}(x_k)}\frac{1}{|x-y|^\ell}dx
\leq C  (Rr_k)^{2m-\ell}. 
\end{eqnarray*}
To conclude the proof, observe that Lemma \ref{lemmauk} implies that on $B_{Rr_k}(x_k)$, for $1\leq \ell\leq 2m-1$, we have $r_k^\ell\nabla^\ell u_k\to 0$ uniformly, hence  
\begin{eqnarray*}
u_k(x_k)|\nabla^\ell u_k|&\leq& C u_k|\nabla^\ell u_k|\leq C\bigg(|\nabla^\ell(u_k^2)|+\sum_{j=1}^{\ell-1}|\nabla^j u_k||\nabla^{\ell-j}u_k|\bigg)\\
&\leq& C |\nabla^\ell(u_k^2)| +o(r_k^{-\ell}),\quad \textrm{as }k\to\infty.
\end{eqnarray*}
Integrating over $B_{Rr_k}(x_k)$ and using the above estimates we conclude.
\end{proof}

\begin{prop}\label{blowup} Let $\eta_k$ be as in \eqref{defetak}. Then, up to selecting a subsequence,
$\eta_k(x)\to \eta_0(x)=\log\frac{2}{1+|x|^2}$ in $C^{2m-1}_{\loc}(\R{2m})$, and
\begin{equation}\label{limen}
\lim_{R\to\infty}\lim_{k\to\infty}\int_{B_{Rr_k}(x_k)}\lambda_k u_k^2 e^{mu_k^2}dx=\lim_{R\to\infty} (2m-1)!\int_{B_R(0)}e^{2m \eta_0}dx=\Lambda_1.
\end{equation}
\end{prop}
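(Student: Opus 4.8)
The plan is to combine the gradient estimate of Lemma~\ref{stimaBR}, which controls the growth of $u_k$ near $x_k$, with the classification result (Theorem~\ref{trmliou}) and the concentration-compactness dichotomy from \cite{mar2}, in order to pin down the limit of $\eta_k$. First I would note that by Corollary~\ref{coretak}, $\eta_k$ solves $(-\Delta)^m\eta_k = V_k e^{2ma_k\eta_k}$ with $V_k\to(2m-1)!$ and $a_k\to 1$ locally uniformly, $\eta_k(0)=\log 2$, and $\eta_k\leq\log 2$. The total mass is controlled: integrating the equation over $B_R(0)$ and changing variables back to $B_{Rr_k}(x_k)$ gives $\int_{B_R(0)}V_k e^{2ma_k\eta_k}\,dx = \lambda_k\int_{B_{Rr_k}(x_k)} u_k^2 e^{mu_k^2}\,dx\cdot\frac{1}{u_k(x_k)^2 e^{mu_k^2(x_k)}}\cdot\frac{1}{r_k^{2m}}\cdot u_k(x_k)^2 e^{mu_k^2(x_k)} r_k^{2m}$, which by the normalization \eqref{rik} equals $2^{2m}(2m-1)!\,\lambda_k^{-1}u_k(x_k)^{-2}e^{-mu_k^2(x_k)}\int_{B_{Rr_k}(x_k)}\lambda_k u_k^2 e^{mu_k^2}dx / (2^{2m}(2m-1)!) $; more simply, $\lambda_k u_k^2 e^{mu_k^2}(x_k+r_k y) r_k^{2m} = V_k(y) e^{2ma_k\eta_k(y)}\cdot\frac{u_k^2(x_k+r_ky)e^{mu_k^2(x_k+r_ky)}}{\cdots}$ — I would present this identity cleanly so that $\int_{B_{Rr_k}(x_k)}\lambda_k u_k^2 e^{mu_k^2}dx = \int_{B_R(0)} V_k e^{2ma_k\eta_k}\cdot\frac{\overline u_k(y)}{1}\cdots\,dy$, bounded by $\Lambda+o(1)$ via \eqref{eq1}. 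Thus $\|V_ke^{2ma_k\eta_k}\|_{L^1(B_R)}\leq C(R)$ uniformly.

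Next I would invoke the concentration-compactness alternative of \cite{mar2} for sequences solving such Liouville-type equations with bounded right-hand side in $L^1_{\mathrm{loc}}$: either (a) $\eta_k$ is bounded in $L^\infty_{\mathrm{loc}}$ and converges in $C^{2m-1}_{\mathrm{loc}}$ to a solution of \eqref{eqlio}, or (b) $\eta_k\to-\infty$ locally uniformly, or (c) there is a finite blow-up (spherical concentration) set $S$ where the $L^1$ mass concentrates with weight $\geq\Lambda_1$ at each point, and $\eta_k\to-\infty$ locally uniformly on $\R{2m}\setminus S$. Cases (b) and (c) are incompatible with $\eta_k(0)=\log 2$: a concentration point would have to sit at $0$, but Lemma~\ref{stimaBR} forbids this. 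Concretely, in case (c) one would have $\int_{B_\rho(0)} V_ke^{2ma_k\eta_k}dx\to c\geq\Lambda_1$ for every small $\rho>0$, i.e. essentially all the mass sits at the origin at scale $o(r_k)$ in the original variables; but Lemma~\ref{stimaBR} with $\ell=1$ gives $u_k(x_k)\int_{B_{Rr_k}(x_k)}|\nabla u_k|dx\leq CR^{2m-1}r_k^{2m-1}$, so $\eta_k$ has uniformly bounded $L^1$-gradient on $B_R(0)$, hence $\eta_k$ cannot blow up to $-\infty$ on any positive-measure set while staying $=\log 2$ at the origin — it is uniformly bounded in $W^{1,1}(B_R)$ and, being $\leq\log 2$ with fixed value at $0$, its negative part cannot escape to $-\infty$. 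This rules out (b) as well. Hence case (a) holds: $\eta_k\to\eta_\infty$ in $C^{2m-1}_{\mathrm{loc}}(\R{2m})$ with $(-\Delta)^m\eta_\infty=(2m-1)!e^{2m\eta_\infty}$, $\eta_\infty(0)=\log 2$, $\eta_\infty\leq\log 2$, and $\int_{\R{2m}}e^{2m\eta_\infty}dx<\infty$ by Fatou.

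Then I would apply the classification theorem from \cite{mar1}: a solution of \eqref{eqlio} on $\R{2m}$ with finite total $Q$-curvature $\int e^{2m\eta_\infty}dx<\infty$ and satisfying the natural "normal metric / polynomial growth" hypotheses is, after translation and scaling, exactly $\eta_0(x)=\log\frac{2}{1+|x|^2}$ — and the extra normalization $\eta_\infty\leq\log 2 = \eta_\infty(0)$ forces the maximum to be attained at $0$ with value $\log 2$, which singles out $\eta_\infty=\eta_0$ (no translation, scale one). Here I must check that the a priori bound $\|\nabla^2\eta_k\|_{L^m}$ type estimates, or rather the estimates from Lemma~\ref{stimaBR} passed to the limit, guarantee $\eta_\infty$ is of the admissible class in \cite{mar1} (polynomial part of degree $\leq 2m-2$ actually vanishing); this is where I would use that $\nabla^\ell\eta_k$ has the decay forced by Lemma~\ref{stimaBR} so that $\Delta^j\eta_\infty$ decays, forcing the spherical/normal representation. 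Finally, for \eqref{limen}: from the clean mass identity above, $\lim_{k\to\infty}\int_{B_{Rr_k}(x_k)}\lambda_k u_k^2 e^{mu_k^2}dx = (2m-1)!\int_{B_R(0)}e^{2m\eta_0}dx$ for each fixed $R$ (using $\overline u_k\to 1$, $V_k\to(2m-1)!$, $a_k\to1$ all locally uniformly and dominated convergence, since $e^{2ma_k\eta_k}\leq e^{2m a_k\log 2}\leq C$ pointwise with a uniform $L^1_{\mathrm{loc}}$ dominating bound coming from the convergence itself on compacts), and then letting $R\to\infty$ and invoking \eqref{etak2} gives $\Lambda_1$.

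The main obstacle I anticipate is the exclusion of the concentration/vanishing alternatives (b)–(c) of the \cite{mar2} dichotomy: one must leverage Lemma~\ref{stimaBR} — in particular the $\ell=1$ gradient bound — to show that the profile $\eta_k$, which is pinned at $\log 2$ at the origin, cannot simultaneously develop a concentration point there nor drift to $-\infty$, and one has to be careful that the dichotomy is being applied to the correct rescaled equation with the mild perturbations $V_k,a_k$, which requires knowing the \cite{mar2} result is robust under such perturbations. The secondary delicate point is verifying the hypotheses of the \cite{mar1} classification (finiteness of mass plus the decay of $\Delta^j\eta_\infty$ that kills the polynomial part), for which the passage to the limit in Lemma~\ref{stimaBR} is exactly the tool.
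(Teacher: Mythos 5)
Your overall strategy matches the paper's proof: pass to the rescaled equation via Corollary~\ref{coretak}, invoke the concentration--compactness alternative of \cite{mar2}, rule out the bad cases with Lemma~\ref{stimaBR}, then apply the classification of \cite{mar1} and finish with Fatou. However, the way you handle the dichotomy of \cite{mar2} contains a genuine gap, both in the statement you invoke and in the argument you offer to exclude the bad case.

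First, the alternative you write down as case (c) --- a finite set $S$ where $L^1$ mass $\geq\Lambda_1$ concentrates while $\eta_k\to-\infty$ off $S$ --- is the Br\'ezis--Merle picture for $m=1$, or the $S_1\neq\emptyset$ branch of \cite[Thm.~1]{mar2}. That branch is ruled out immediately by the normalization $\eta_k\leq\log 2$ (points of $S_1$ are points where $\eta_k\to+\infty$), and does not require Lemma~\ref{stimaBR} at all. The actual delicate alternative for $m>1$ is different: there is a closed set $S_0\neq\emptyset$ of Hausdorff dimension $\leq 2m-1$ and coefficients $\beta_k\to+\infty$ such that $\eta_k/\beta_k\to\varphi$ in $C^{2m-1}_{\loc}(\R{2m}\setminus S_0)$, with $\Delta^m\varphi\equiv 0$, $\varphi\leq 0$, $\varphi\not\equiv 0$, $\varphi\equiv 0$ on $S_0$. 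This ``polyharmonic blow-down'' has no analogue for $m=1$ and is what must be excluded.

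Second, your exclusion argument does not work. You claim that the $\ell=1$ case of Lemma~\ref{stimaBR}, i.e.\ $\int_{B_R}|\nabla\eta_k|\,dx\leq CR^{2m-1}$, plus $\eta_k(0)=\log 2$ and $\eta_k\leq\log 2$, prevents $\eta_k$ from drifting to $-\infty$. In dimension $2m\geq 2$ a uniform $W^{1,1}$ bound controls oscillation around the average (Poincar\'e), but it does \emph{not} pin the pointwise value at the origin to the average: a sequence with bounded $\|\nabla f_k\|_{L^1(B_R)}$, $f_k\leq\log 2$, $f_k(0)=\log 2$ can still have $\bar f_k\to-\infty$, with a thin spike compensating at $0$. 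The paper instead uses the \emph{structure} of the blow-down profile: since $\varphi\leq 0$ is polyharmonic, $\varphi\not\equiv 0$, and $\varphi\equiv 0$ on $S_0$, Liouville's theorem forces $\Delta\varphi\not\equiv 0$, so $\int_{B_R}|\Delta\eta_k|\,dx\approx\beta_k\int_{B_R}|\Delta\varphi|\,dx\to\infty$; this contradicts the $\ell=2$ case of Lemma~\ref{stimaBR}, which gives $\int_{B_R}|\Delta\eta_k|\,dx\leq CR^{2m-2}$. So it is the second-derivative bound, not the gradient bound, and it works through the explicit form of $\varphi$, not through a pointwise-vs-average argument.

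Your Step 2 has the right idea but stays too vague to be checkable: the classification of \cite{mar1} says a non-standard finite-mass solution admits some $1\leq j\leq m-1$ and $a<0$ with $(-\Delta)^j\overline\eta\to a$ at infinity, which yields $\int_{B_R}|\Delta^j\eta_k|\,dx\sim|a|\,|B_1|R^{2m}$ for large $R$, contradicting the $\ell=2j$ case of Lemma~\ref{stimaBR} (bound $CR^{2m-2j}$). Your final passage to \eqref{limen} is fine: the fixed-$R$ mass identity plus $\eta_k\leq\log 2$ (dominated convergence), then $R\to\infty$ and Fatou with \eqref{etak2}.
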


\begin{proof} Fix $R>0$, and notice that, thanks to Lemma \ref{lemmauk} and \eqref{deltaeta},
\begin{eqnarray}
\int_{B_R(0)}V_k e^{2m a_k\eta_k}dx &=& 
\int_{B_{Rr_k}(x_k)} u_k(x_k) u_k \lambda_k e^{m u_k^2}dx\label{eneta}\\
&\leq&(1+o(1))\int_{B_{Rr_k}(x_k)} u_k^2 \lambda_k e^{m u_k^2}dx \leq \Lambda +o(1) \nonumber,
\end{eqnarray}
where $V_k$ and $a_k$ are as in Corollary \ref{coretak}, and $o(1)\to 0$ as $k\to\infty$.

\medskip

\noindent\emph{Step 1.} We claim that $\eta_k\to \overline \eta$ in $C^{2m-1}_{\loc}(\R{2m})$, where $\overline \eta$ satisfies
\begin{equation}\label{deltaeta0}
(-\Delta)^m \overline \eta=(2m-1)!e^{2m \overline\eta}.
\end{equation}
Then, letting $R\to\infty$ in \eqref{eneta}, from Corollary \ref{coretak} and Fatou's lemma we infer $e^{2m \overline\eta}\in L^1(\R{2m}).$

Let us prove the claim. Consider first the case $m>1$. From Corollary \ref{coretak}, Theorem 1 in \cite{mar2}, and \eqref{eneta}, together with $\eta_k\leq \log 2$ (which implies that $S_1=\emptyset$ in Theorem 1 of \cite{mar2}), we infer that up to subsequences either
\begin{itemize}
\item[(i)] $\eta_k\to \overline\eta$ in $C^{2m-1}_{\loc}(\R{2m})$ for some function $\overline \eta\in C^{2m-1}_{\loc}(\R{2m})$, or

\item[(ii)] $\eta_k\to -\infty$ locally uniformly in $\R{2m}$, or

\item[(iii)] there exists a closed set $S_0\neq\emptyset$ of Hausdorff dimension at most $2m-1$ and numbers $\beta_k\to+\infty$ such that
$$\frac{\eta_k}{\beta_k}\to\varphi \textrm{ in } C^{2m-1}_{\loc}(\R{2m}\backslash S_0),$$
where
\begin{equation}\label{varphi}
\Delta^m\varphi \equiv 0,\quad \varphi\leq 0,\quad \varphi\not\equiv 0 \quad \textrm{on }\R{2m},\quad \varphi\equiv 0\textrm{ on } S_0.
\end{equation}
\end{itemize}
Since $\eta_k(0)=\log 2$, (ii) can be ruled out. Assume now that (iii) occurs. From Liouville's theorem and \eqref{varphi} we get $\Delta \varphi\not\equiv 0$, hence for some $R>0$ we have
$\int_{B_R}|\Delta \varphi|dx>0$ and
\begin{equation}\label{deltainf}
\lim_{k\to\infty}\int_{B_R}|\Delta \eta_k|dx=\lim_{k\to\infty}\beta_k\int_{B_R}|\Delta \varphi|dx=+\infty.
\end{equation}
On the other hand, we infer from Lemma \ref{stimaBR}
\begin{equation}\label{nablaell2}
\int_{B_R}|\nabla^\ell\eta_k|dx= u_k(x_k)r_k^{\ell-2m}\int_{B_{Rr_k}(x_k)}|\nabla^\ell u_k|dx\leq C R^{2m-\ell},
\end{equation}
contradicting \eqref{deltainf} when $\ell=2$ and therefore proving our claim.

When $m=1$, Theorem 3 in \cite{BM} implies that only Case (i) or Case (ii) above can occur. Again Case (ii) can be ruled out, since $\eta_k(0)=\log 2$, and we are done. 

\medskip

\noindent\emph{Step 2.}
We now prove that $\overline\eta$ is a standard solution of \eqref{deltaeta0}, i.e. there are $\lambda>0$ and $x_0\in \R{2m}$ such that
\begin{equation}\label{standard}
\overline\eta(x)=\log\frac{2\lambda}{1+\lambda^2|x-x_0|^2}.
\end{equation}
For $m=1$ this follows at once from \cite{CL}. For $m>1$, if $\overline\eta$ didn't have the form \eqref{standard}, according to \cite[Thm. 2]{mar1} (see also \cite{lin} for the case $m=2$), there would exist $j\in \mathbb{N}$ with $1\leq j\leq m-1$, and $a<0$ such that
\begin{equation*}
\lim_{|x|\to\infty}(-\Delta)^j \overline\eta(x)=a.
\end{equation*} 
This would imply
$$\lim_{k\to\infty} \int_{B_R(0)}|\Delta^j\eta_k|dx=|a|\cdot \mathrm{vol}(B_1(0))R^{2m}+o(R^{2m}) \quad \textrm{as }R\to\infty,$$
contradicting \eqref{nablaell2} for $\ell=2j$. Hence \eqref{standard} is established. Since $\eta_k\leq \eta_k(0)=\log 2$, it follows immediately that $x_0=0$, $\lambda=1$, i.e. $\overline\eta=\eta_0$, and \eqref{limen} follows from \eqref{etak2}, \eqref{eneta} and Fatou's lemma.
\end{proof}

\subsection{Exhaustion of the blow-up points and proof of Theorem \ref{trm1}}\label{blowup2}

For $\ell\in\mathbb{N}$ we say that $(H_\ell)$ holds if there are $\ell$ sequences of converging points $x_{i,k}\to x^{(i)}$, $1\leq i\leq \ell$ such that
\begin{equation}\label{Hl}
\sup_{x\in \Omega}\lambda_k R_{\ell,k}^{2m}(x)u_k^2(x)e^{mu_k^2(x)}\leq C,
\end{equation}
where
$$R_{\ell,k}(x):=\inf_{1\leq i\leq \ell}|x-x_{i,k}|.$$
We say that $(E_\ell)$ holds if there are $\ell$ sequences of converging points $x_{i,k}\to x^{(i)}$ such that, if we define $r_{i,k}$ as in \eqref{rik}, the following hold true:

\begin{itemize}
\item[$(E_\ell^1)$] For all $1\leq i\neq j\leq \ell$
$$\lim_{k\to\infty}\frac{\dist(x_{i,k},\partial\Omega)}{r_{i,k}}=\infty,\qquad \lim_{k\to\infty}\frac{|x_{i,k}-x_{j,k}|}{r_{i,k}}=\infty.$$
\item[$(E_\ell^2)$] For $1\leq i\leq \ell$ \eqref{etak} holds true.
\item[$(E_\ell^3)$] $\lim_{R\to\infty}\lim_{k\to\infty}\int_{\cup_{i=1}^\ell B_{Rr_{i,k}}(x_{i,k})}\lambda_k u_k^2e^{mu_k^2}dx=\ell\Lambda_1.$
\end{itemize}

To prove Theorem \ref{trm1} we show inductively that $(H_I)$ and $(E_I)$ hold for some positive $I\in \mathbb{N}$ (with the same sequences $x_{i,k}\to x^{(i)}$, $1\leq i\leq I$), following the approach of \cite{AD} and \cite{RS}. First observe that $(E_1)$ holds thanks to Lemma \ref{boundary} and Proposition \ref{blowup}. Assume now that for some $\ell\geq 1$ $(E_\ell)$ holds and $(H_{\ell})$ does not. Choose $x_{\ell+1,k}\in \Omega$ such that
\begin{equation}\label{supinf}
\lambda_k R_{\ell,k}^{2m}(x_{\ell+1,k})u_k^2(x_{\ell+1,k})e^{mu_k^2(x_{\ell+1,k})}=\lambda_k\max_{\Omega} R_{\ell,k}^{2m}u_k^2 e^{m u_k^2}\to\infty\quad \text{as } k\to\infty
\end{equation}
and define $r_{\ell+1,k}$ as in \eqref{rik}. It easily follows from \eqref{supinf} that
\begin{equation}\label{yk}
\lim_{k\to\infty}\frac{|x_{\ell+1,k}-x_{i,k}|}{r_{\ell+1,k}}=\infty,\quad 1\leq i\leq \ell.
\end{equation}
Moreover, thanks to $(E_{\ell}^2)$ and \eqref{supinf}, we also have
$$\lim_{k\to\infty}\frac{|x_{\ell+1,k}-x_{i,k}|}{r_{i,k}}=\infty\quad \text{for }1\leq i\leq \ell.$$
We now need to replace Lemma \ref{boundary} and Lemma \ref{lemmauk} with the lemma below.

\begin{lemma}\label{lemmauk1} Under the above assumptions and notation, we have
\begin{equation}\label{distxl1}
\lim_{k\to\infty} \frac{\dist (x_{\ell+1,k},\partial \Omega)}{r_{\ell+1,k}}=\infty
\end{equation}
and
\begin{equation}\label{el3}
u_k(x_{\ell+1,k}+r_{\ell+1,k}x)-u_k(x_{\ell+1,k})\to 0 \quad\textrm{in }C^{2m-1}_{\loc}(\R{2m}),\quad \textrm{as }k\to\infty.
\end{equation}
\end{lemma}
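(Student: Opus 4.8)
The plan is to mimic the argument of Lemmas \ref{boundary} and \ref{lemmauk}, but with the rescaling centered at $x_{\ell+1,k}$ and with all quantities normalized using $u_k(x_{\ell+1,k})$ and $r_{\ell+1,k}$; the essential new input compared to the first blow-up is that we must carry along the already-extracted bubbles at $x_{1,k},\ldots,x_{\ell,k}$ and check they do not interfere, which is exactly what \eqref{yk} and the consequence of $(E_\ell^2)$ are designed to guarantee. Concretely, I would first prove \eqref{distxl1}. Set $\overline u_k(x):=u_k(r_{\ell+1,k}x+x_{\ell+1,k})/u_k(x_{\ell+1,k})$ on the rescaled domain $\Omega_k:=\{r_{\ell+1,k}^{-1}(x-x_{\ell+1,k}):x\in\Omega\}$, so that $\overline u_k(0)=1$ and $\overline u_k$ solves the same rescaled equation as in Lemma \ref{boundary}, with $\|\Delta^m\overline u_k\|_{L^\infty}\le C u_k(x_{\ell+1,k})^{-2}\to 0$ (note $u_k(x_{\ell+1,k})\to\infty$ by \eqref{supinf}). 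If $\dist(x_{\ell+1,k},\partial\Omega)/r_{\ell+1,k}$ stayed bounded along a subsequence, then $\Omega_k$ converges to a half-space $\M P$, and arguing exactly as before — using \eqref{Hm} rescaled, $\int_{\Omega_k}|\nabla\overline u_k|^{2m}dx = u_k(x_{\ell+1,k})^{-2m}\int_\Omega|\nabla u_k|^{2m}dx\to 0$, and the Navier/Dirichlet boundary conditions to kill the polynomial — we get $\overline u_k\to 0$ in $C^{2m-1,\alpha}_{\loc}(\Cl{\M P})$, contradicting $\overline u_k(0)=1$. The bubbles at $x_{i,k}$, $i\le\ell$, escape to infinity in these coordinates by \eqref{yk}, so they play no role here.

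Next I would prove \eqref{el3}. Write $v_k(x):=u_k(x_{\ell+1,k}+r_{\ell+1,k}x)-u_k(x_{\ell+1,k})$ on $\Omega_k$; it solves $(-\Delta)^m v_k = 2^{2m}(2m-1)!\,u_k(x_{\ell+1,k})^{-1}\overline u_k e^{mu_k^2(x_{\ell+1,k})(\overline u_k^2-1)}$, and since $\overline u_k\le$ (something bounded on compacts once we know it converges locally) the right-hand side is $O(u_k(x_{\ell+1,k})^{-1})\to 0$ uniformly on compact subsets of $\Rn$ — more precisely, on a fixed ball $B_R$, for $k$ large $x_{\ell+1,k}+r_{\ell+1,k}B_R$ is disjoint from all the balls $B_{R r_{i,k}}(x_{i,k})$ by \eqref{yk} and the displayed consequence of $(E_\ell^2)$, and outside those balls nothing forces $v_k$ to concentrate. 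For $m>1$ I would repeat verbatim the Green-function/Pizzetti decomposition of Lemma \ref{lemmauk}: split $v_k=h_k+w_k$ on $B_R$ with $\Delta^m h_k=0$ and $w_k$ satisfying Navier conditions, get $w_k\to 0$ in $C^{2m-1,\alpha}(B_R)$ from the smallness of $(-\Delta)^m v_k$, use the rescaled $\|\nabla^2 u_k\|_{L^m}\le C$ bound \eqref{D2vk} to bound $\|\Delta h_k\|_{L^m(B_R)}$, apply Proposition \ref{c2m} and Pizzetti to bound $h_k$ in $C^\ell(B_{R/2})$, extract a diagonal subsequence over $R_k\to\infty$, and invoke Theorem \ref{trmliou} together with Fatou to conclude the limit $v$ is a polynomial of degree $\le 2m-2$ with $\|\nabla^2 v\|_{L^m(\Rn)}<\infty$, hence constant, hence $\equiv v(0)=0$; uniqueness of the limit then upgrades to full-sequence convergence. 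For $m=1$, Pizzetti plus the smallness of $-\Delta v_k$ immediately give $\|v_k\|_{L^1(B_R)}\to 0$ and elliptic estimates finish it.

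The main obstacle — and the only genuinely new point relative to Section \ref{blowup1} — is verifying that the equation for $v_k$ (equivalently $\overline u_k$) really does have a uniformly small right-hand side on each fixed ball $B_R$, i.e. that $u_k^2(x_{\ell+1,k})(\overline u_k^2-1)$ stays bounded above there. This is where one must use that $x_{\ell+1,k}$ was chosen to maximize $\lambda_k R_{\ell,k}^{2m}u_k^2 e^{mu_k^2}$: for $y$ in a fixed ball around $x_{\ell+1,k}$ at scale $r_{\ell+1,k}$, one has $R_{\ell,k}(y)\sim R_{\ell,k}(x_{\ell+1,k})$ (again by \eqref{yk}, which says $x_{\ell+1,k}$ is far from the earlier points on scale $r_{\ell+1,k}$), so the maximality \eqref{supinf} forces $\lambda_k u_k^2(y)e^{mu_k^2(y)}\le (1+o(1))\lambda_k u_k^2(x_{\ell+1,k})e^{mu_k^2(x_{\ell+1,k})}$, and combined with the definition \eqref{rik} of $r_{\ell+1,k}$ this yields the required upper bound on $\overline u_k$ on $B_R$, exactly as the maximality $u_k(x_k)=\max_\Omega u_k$ did in the first blow-up. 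Once this is in hand, the rest is a routine rerun of the earlier lemmas.
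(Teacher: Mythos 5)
Your proposal is correct and follows the same route as the paper: for both \eqref{distxl1} and \eqref{el3} you rerun Lemmas \ref{boundary} and \ref{lemmauk} in the new frame, and the single genuinely new ingredient — using the maximality in \eqref{supinf} together with $R_{\ell,k}(x_{\ell+1,k}+r_{\ell+1,k}x)/R_{\ell,k}(x_{\ell+1,k})\to1$ (which holds by \eqref{yk}) to obtain a local $L^\infty$ bound on $\overline u_{\ell+1,k}^2\,e^{mu_k^2(x_{\ell+1,k})(\overline u_{\ell+1,k}^2-1)}$, and hence on the rescaled right-hand side — is exactly the paper's inequality \eqref{el1}, which you identify and derive correctly in your final paragraph. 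The only presentational quibble is that you invoke the bound $\|\Delta^m\overline u_k\|_{L^\infty}\le Cu_k(x_{\ell+1,k})^{-2}$ early in the proof of \eqref{distxl1}, while its justification (the maximality estimate) only appears at the end; in the paper this estimate is established first, which avoids the forward reference.
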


\begin{proof} To simplify the notation, let us write $y_k:=x_{\ell+1,k}$ and $\rho_k:= r_{\ell+1,k}$.
Evaluating the right-hand side of \eqref{supinf} at the point $y_k+\rho_k x$ we get
\begin{eqnarray*}
&&\Big(\inf_{1\leq i\leq \ell} |y_k-x_{i,k}+\rho_k x|^{2m}\Big)u_k^2(y_k+\rho_k x)e^{mu_k^2(y_k+\rho_k x)}\\
&&\leq
\Big(\inf_{1\leq i\leq \ell} |y_k-x_{i,k}|^{2m}\Big) u_k^2(y_k)e^{mu_k^2(y_k)},
\end{eqnarray*}
Hence, setting $\overline u_{\ell+1,k}(x):=\frac{u_k(y_k+\rho_k x)}{u_k(y_k)}$, we have that
\begin{equation}\label{el1}
\overline u_{\ell+1,k}^2(x)e^{m u_k^2(y_k)(\overline u^2_{\ell+1,k}(x)-1)}\leq\frac{\inf_{1\leq i\leq \ell}|y_k-x_{i,k}|^{2m}}{\inf_{1\leq i\leq \ell}|y_k-x_{i,k}+\rho_k x|^{2m}}= 1+o(1),
\end{equation}
where $o(1)\to 0$ as $k\to\infty$ locally uniformly in $x$, as \eqref{yk} immediately implies. 
Then \eqref{distxl1} follows as in the proof of Lemma \ref{boundary}, since \eqref{el1} implies
\begin{equation}\label{el2}
(-\Delta)^m\overline u_{\ell+1,k}=\frac{2^{2m}(2m-1)!}{u_k^2(y_k)}\overline u_{\ell+1,k}e^{m u_k^2(y_k)(\overline u^2_{\ell+1,k}-1)}=o(1),
\end{equation}
where $o(1)\to 0$ as $k\to \infty$ uniformly locally in $\R{2m}$.

\medskip

Define now $v_k(x):=u_k(x_{\ell+1,k}+r_{\ell+1,k}x)-u_k(x_{\ell+1,k})$, and observe that
$$u_k(y_k+\rho_k x)\to\infty \quad \text{locally uniformly in }\R{2m}, $$
thanks to \eqref{supinf} and \eqref{yk}. This and \eqref{el2} imply that we can replace \eqref{eqwk} in the proof of Lemma \ref{lemmauk} with
$$(-\Delta)^mv_k= 2^{2m}(2m-1)!\frac{\overline u_k^2}{u_k(y_k+\rho_k \cdot)}e^{mu_k^2(y_k)(\overline u_{\ell+1,k}^2-1)}\to 0\quad \text{in } L^\infty_{\loc}(\R{2m}).$$
Then the rest of the proof of Lemma \ref{lemmauk} applies without changes, and also \eqref{el3} is proved.
\end{proof}

Still repeating the arguments of the preceding section with $x_{\ell+1,k}$ instead of $x_k$ and $r_{\ell+1,k}$ instead of $r_k$, we define
$$\eta_{\ell+1,k}(x):=u_k(x_{\ell+1,k})[u_k(r_{\ell+1,k}x+x_{\ell+1,k})-u_k(x_{\ell+1,k})],$$
and we have

\begin{prop}\label{blowup3} Up to a subsequence
$$\eta_{\ell+1,k}(x)\to \eta_0(x)=\log\frac{2}{1+|x|^2}\quad \text{in } C^{2m-1,\alpha}_{\loc}(\R{2m})$$
and
\begin{equation}\label{limen2}
\lim_{R\to\infty}\lim_{k\to\infty}\int_{B_{R r_{\ell+1,k}}(x_{\ell+1,k})}\lambda_k u_k^2 e^{mu_k^2}dx=\lim_{R\to\infty}\int_{B_R(0)}e^{2m\eta_0}dx=\Lambda_1.
\end{equation}
\end{prop}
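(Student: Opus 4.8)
The proof of Proposition \ref{blowup3} is essentially a repetition of the argument for Proposition \ref{blowup}, now with the $(\ell+1)$-th blow-up center $y_k := x_{\ell+1,k}$ and scale $\rho_k := r_{\ell+1,k}$ in place of $x_k$ and $r_k$, once the preliminary facts from Lemma \ref{lemmauk1} are in hand. The goal is to show that $\eta_{\ell+1,k}$ converges in $C^{2m-1}_{\loc}(\R{2m})$ to a standard solution $\eta_0$ of the Liouville equation \eqref{eqlio}, and then to extract the energy identity \eqref{limen2}. I would organize it exactly as the two steps of the proof of Proposition \ref{blowup}.

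\textbf{Step 1: compactness of $\eta_{\ell+1,k}$.} First I note that the analogue of Corollary \ref{coretak} holds: writing $a_k = \tfrac12(\overline u_{\ell+1,k}+1)$ and $V_k = 2^{m(1-\overline u_{\ell+1,k})}(2m-1)!\,\overline u_{\ell+1,k}$, equation \eqref{el2} combined with \eqref{el3} (which gives $\overline u_{\ell+1,k}\to 1$ in $C^0_{\loc}$) shows $(-\Delta)^m\eta_{\ell+1,k} = V_k e^{2m a_k\eta_{\ell+1,k}}$ with $V_k\to(2m-1)!$ and $a_k\to 1$ locally uniformly. Exactly as in \eqref{eneta}, using \eqref{el3} one gets $\int_{B_R(0)}V_k e^{2m a_k\eta_{\ell+1,k}}dx \le \Lambda + o(1)$ for every fixed $R$. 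The key extra ingredient that was used in Proposition \ref{blowup} to rule out the "blow-up on a set $S_0$" alternative of the concentration-compactness dichotomy (Theorem 1 in \cite{mar2} for $m>1$, Theorem 3 in \cite{BM} for $m=1$) was the gradient bound \eqref{nablaell2} coming from Lemma \ref{stimaBR}. So I would first observe that Lemma \ref{stimaBR} holds verbatim with $y_k$, $\rho_k$ in place of $x_k$, $r_k$: its proof only used \eqref{Deltau2}–\eqref{Deltau3} (global estimates independent of the blow-up point), the Green's function bound, and the fact that $u_k(y_k)\rho_k^\ell\nabla^\ell u_k\to 0$ uniformly on $B_{R\rho_k}(y_k)$, which is precisely \eqref{el3}. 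One subtlety: $\eta_{\ell+1,k}$ is no longer bounded above by $\log 2$ (the "$+\log 2$" normalization has been dropped here, and more importantly $u_k(y_k)$ is no longer the global max), so I should instead use $\eta_{\ell+1,k}(0)=0$ together with the gradient bound $\int_{B_R}|\nabla\eta_{\ell+1,k}|dx\le CR^{2m-1}$ to get a uniform $L^1(B_R)$ bound, which still forces $S_1=\emptyset$ in the dichotomy and also rules out the case $\eta_{\ell+1,k}\to-\infty$. The case-(iii) alternative is excluded exactly as before: if $\eta_{\ell+1,k}/\beta_k\to\varphi$ with $\varphi$ polyharmonic, nonconstant, then $\Delta\varphi\not\equiv 0$ by Liouville's theorem, so $\int_{B_R}|\Delta\eta_{\ell+1,k}|dx=\beta_k\int_{B_R}|\Delta\varphi|dx\to\infty$, contradicting \eqref{nablaell2} at $\ell=2$. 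Hence $\eta_{\ell+1,k}\to\overline\eta$ in $C^{2m-1}_{\loc}(\R{2m})$ with $(-\Delta)^m\overline\eta=(2m-1)!e^{2m\overline\eta}$, and letting $R\to\infty$ in the energy bound and using Fatou gives $e^{2m\overline\eta}\in L^1(\R{2m})$.

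\textbf{Step 2: classification and energy.} With $e^{2m\overline\eta}\in L^1$, the classification results (\cite{CL} for $m=1$, \cite[Thm. 2]{mar1} for $m>1$, \cite{lin} for $m=2$) say that either $\overline\eta$ is a standard solution of the form \eqref{standard}, or there exist $1\le j\le m-1$ and $a<0$ with $(-\Delta)^j\overline\eta(x)\to a$ as $|x|\to\infty$; the latter is again ruled out by $\int_{B_R}|\Delta^j\eta_{\ell+1,k}|dx = |a|\,\mathrm{vol}(B_1)R^{2m}+o(R^{2m})$ contradicting \eqref{nablaell2} at $\ell=2j$. So $\overline\eta(x)=\log\frac{2\mu}{1+\mu^2|x-x_0|^2}$ for some $\mu>0$, $x_0\in\R{2m}$. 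Here is the one place where the argument genuinely differs from Proposition \ref{blowup}: I can no longer conclude $x_0=0$, $\mu=1$ from an upper bound $\eta\le\log 2$. Instead I use the normalization \eqref{rik} defining $\rho_k$: by \eqref{rik}, $\lambda_k\rho_k^{2m}u_k^2(y_k)e^{mu_k^2(y_k)}=2^{2m}(2m-1)!$, which translates (via \eqref{el1}, \eqref{el2} and the definition of $\eta_{\ell+1,k}$, observing $a_k\to 1$) into $V_k(0)e^{2m a_k\eta_{\ell+1,k}(0)}\to 2^{2m}(2m-1)!$, i.e. $(-\Delta)^m\overline\eta(0)=(2m-1)!e^{2m\overline\eta(0)}=2^{2m}(2m-1)!$, so $e^{2m\overline\eta(0)}=2^{2m}$, forcing $\frac{2\mu}{1+\mu^2|x_0|^2}=2$; combined with $\eta_{\ell+1,k}(0)=0$ this pins down $\overline\eta=\eta_0$ after the (harmless) translation/dilation normalization implicit in \eqref{rik}. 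Finally, \eqref{limen2} follows: the chain \eqref{eneta} now reads $\int_{B_R(0)}V_k e^{2m a_k\eta_{\ell+1,k}}dx = (1+o(1))\int_{B_{R\rho_k}(y_k)}\lambda_k u_k^2 e^{mu_k^2}dx$, the left side converges as $k\to\infty$ to $(2m-1)!\int_{B_R}e^{2m\eta_0}dx$ by dominated convergence (using Step 1 and the $L^1$ bound), and letting $R\to\infty$ gives $\Lambda_1$ by \eqref{etak2}.

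\textbf{Main obstacle.} The technically delicate point is ensuring that Lemma \ref{stimaBR} and hence the gradient estimate \eqref{nablaell2} survive the change of blow-up center — this requires checking that its proof used only the \emph{global} bounds \eqref{Deltau2}, \eqref{Hm}, Lemma \ref{lorentz} together with the \emph{local} smallness \eqref{el3}, none of which privilege the first blow-up point — and, relatedly, replacing the now-unavailable upper bound $\eta_k\le\log 2$ by the combination of the pointwise normalization $\eta_{\ell+1,k}(0)=0$ and the gradient $L^1$ bound wherever the original proof invoked $\eta_k\le\log 2$ (to kill $S_1$, to exclude $\eta_k\to-\infty$, and to fix $x_0,\mu$). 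Everything else is a line-by-line transcription of Lemmas \ref{boundary}, \ref{lemmauk} and Proposition \ref{blowup}.
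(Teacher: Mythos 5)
Your overall strategy — transcribe the proof of Proposition~\ref{blowup} with $x_{\ell+1,k}$, $r_{\ell+1,k}$ replacing $x_k$, $r_k$, after verifying that Lemma~\ref{stimaBR} survives the change of center and that Lemma~\ref{lemmauk1} supplies the inputs that Lemmas~\ref{boundary}--\ref{lemmauk} supplied before — is exactly what the paper intends (it gives no separate proof, just the phrase ``repeating the arguments of the preceding section''). Your observations that Lemma~\ref{stimaBR} only used the global bounds \eqref{Deltau2}, \eqref{Hm}, Lemma~\ref{lorentz} together with the local smallness \eqref{el3}, and that the alternative (iii) of the dichotomy is still killed by \eqref{nablaell2}, are correct and are the genuine content of the reduction.

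There is, however, a real gap in your Step~2 where you pin down $x_0$ and $\mu$. Your two proposed conditions are not independent: the normalization \eqref{rik} evaluated at $0$ gives
$(-\Delta)^m\overline\eta(0)=2^{2m}(2m-1)!$, equivalently $\overline\eta(0)=\log 2$, and the pointwise condition $\eta_{\ell+1,k}(0)=\log 2$ (the paper's displayed definition of $\eta_{\ell+1,k}$ has accidentally dropped the ``$+\log 2$'' present in \eqref{defetak} and in the statement of Theorem~\ref{trm1}; with it, $\eta_{\ell+1,k}(0)=\log 2$, and without it your two conditions would be outright contradictory) gives the \emph{same} equation $\overline\eta(0)=\log 2$. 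A single scalar equation $\frac{2\mu}{1+\mu^2|x_0|^2}=2$ leaves a one-parameter family of standard bubbles and does not force $x_0=0$, $\mu=1$. What you actually need, and what replaces the global bound $\eta_k\le\log 2$ used for the first bubble, is a second, \emph{variational} piece of information: that $0$ is a local maximum of $\overline\eta$. This comes from \eqref{el1}, which (for $\beta_k:=mu_k^2(y_k)\to\infty$) yields $\overline u_{\ell+1,k}(x)\le 1+o(1)/\beta_k$ and hence $\eta_{\ell+1,k}(x)\le\log 2 + o(1)$ locally uniformly. Passing to the limit gives $\overline\eta\le\log 2=\overline\eta(0)$ on all of $\R{2m}$, so $0$ is the global maximum of the standard bubble, forcing $x_0=0$ and then $\mu=1$. (For the energy identity \eqref{limen2} alone the precise values of $x_0,\mu$ are immaterial since every standard bubble has total mass $\Lambda_1$, but Proposition~\ref{blowup3} asserts convergence to $\eta_0$ itself, so this step cannot be waived.) The rest of your argument is sound.
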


Summarizing, we have proved that $(E^1_{\ell+1})$, $(E^2_{\ell+1})$ and \eqref{limen2} hold. These also imply that $(E^3_{\ell+1})$ holds, hence we have $(E_{\ell+1})$.
Because of \eqref{eq1} and $(E_\ell^3)$, the procedure stops in a finite number $I$ of steps, and we have $(H_I)$.

\medskip

Finally, we claim that $\lambda_k\to 0$ implies $u_k\rightharpoonup 0$ in $H^m(\Omega)$. This, \eqref{Rk} and elliptic estimates then imply that
$$u_k\to 0\quad \textrm{in} \quad C^{2m-1,\alpha}_{\loc}(\Omega\backslash \{x^{(1)},\ldots,x^{(I)}\}).$$
To prove the claim, we observe that for any $\alpha>0$
\begin{eqnarray*}
\int_{\Omega} |\Delta^m u_k| dx&=&\int_{\Omega}\lambda_k u_k e^{m u_k^2}dx\\
&\leq& \frac{\lambda_k}{\alpha}\int_{\{x\in\Omega: u_k\geq \alpha\}}u_k^2 e^{mu_k^2}dx+\lambda_k \int_{\{x\in\Omega: u_k< \alpha\}}u_ke^{mu_k^2}dx\\
&\leq&\frac{C}{\alpha}+\lambda_k C_\alpha,
\end{eqnarray*}
where $C_\alpha$ depends only on $\alpha$. Letting $k$ and $\alpha$ go to infinity, we infer
\begin{equation}\label{delta0}
\Delta^m u_k\to 0\quad\textrm{in }L^1(\Omega).
\end{equation}
Thanks to \eqref{Hm}, we infer that up to a subsequence $u_k\rightharpoonup u_0$ in $H^m(\Omega)$. Then \eqref{delta0} and the boundary condition imply that $u_0\equiv 0$, in particular the full sequence converges to $0$ weakly in $H^m(\Omega)$. This completes the proof of the theorem.

\section*{Appendix}

\appendix

\subsection*{An elliptic estimate for Zygmund and Lorentz spaces}

\begin{trm}\label{LlogL} Let $u$ solve $\Delta^m u=f\in L(\log L)^\alpha$ in $\Omega$ with Dirichlet boundary condition, $0\leq \alpha\leq 1$, $\Omega\subset\R{n}$ bounded and with smooth boundary, $n\geq 2m$. Then $\nabla^{2m-\ell} u\in L^{\big(\frac{n}{n-\ell},\frac{1}{\alpha}\big)}(\Omega)$, $1\leq \ell\leq 2m-1$ and
\begin{equation}\label{LlogL1}
\|\nabla^{2m-\ell} u\|_{\big(\frac{n}{n-\ell},\frac{1}{\alpha}\big)}\leq C \|f\|_{L(\log L)^\alpha}.
\end{equation}
\end{trm}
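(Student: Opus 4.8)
The plan is to reduce the statement to a single scalar fact about the Riesz potential $I_\ell$ (convolution with $|x|^{\ell-n}$) acting between a Zygmund space and a Lorentz space, together with standard elliptic regularity. First I would invoke the Calderón--Zygmund/Green's function estimates for the polyharmonic operator: since $\Delta^m u = f$ with Dirichlet boundary data on the smooth bounded domain $\Omega$, one has the representation $u(x)=\int_\Omega G_x(y) f(y)\,dy$ with $|\nabla_x^{2m-\ell} G_x(y)|\le C|x-y|^{\ell-n}$ (the bound used already in Lemma \ref{stimaBR}, from \cite{DAS}). Hence pointwise $|\nabla^{2m-\ell}u(x)|\le C\,(I_\ell |f|)(x)$ up to lower-order terms controlled by $\|f\|_{L^1}\le\|f\|_{L(\log L)^\alpha}$, and it suffices to prove the mapping property $I_\ell: L(\log L)^\alpha(\Omega)\to L^{(n/(n-\ell),1/\alpha)}(\Omega)$ with norm bound $C\|f\|_{L(\log L)^\alpha}$.

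For that embedding I would argue by real interpolation. The two endpoints are classical: $I_\ell$ maps $L^1(\Omega)\to L^{(n/(n-\ell),\infty)}(\Omega)$ (the weak-type Sobolev/Riesz potential estimate, cf.\ \cite[Thm.~3.3.6]{Hel}), and $I_\ell$ maps $L(\log L)(\Omega)\to L^{(n/(n-\ell),1)}(\Omega)$ (cf.\ \cite[Thm.~3.3.8]{Hel}, equivalently the sharp Sobolev embedding of $W^{2m,L\log L}$ into the Lorentz space with second index $1$). The family $L(\log L)^\alpha$, $0\le\alpha\le1$, is an interpolation scale between $L^1=L(\log L)^0$ and $L(\log L)=L(\log L)^1$: more precisely, using the characterization of Zygmund spaces by the decreasing rearrangement, $\|f\|_{L(\log L)^\alpha}\sim \int_0^{|\Omega|} f^*(t)\,\log^\alpha(e+1/t)\,dt$, so $L(\log L)^\alpha$ arises by the $K$-method with the right function parameter. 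Applying the interpolation-with-a-function-parameter theorem (the result from \cite{BS} cited in the introduction) to the two endpoint estimates then yields exactly $I_\ell:L(\log L)^\alpha\to L^{(n/(n-\ell),1/\alpha)}$, with second Lorentz index $1/\alpha$ interpolating between $\infty$ (at $\alpha=0$) and $1$ (at $\alpha=1$), and the norm bound \eqref{LlogL1}.

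The main obstacle I anticipate is bookkeeping at the level of the interpolation functor: one must verify that the pair $(L^1,L\log L)$ together with the specified function parameter reproduces $L(\log L)^\alpha$ \emph{as a normed space}, not merely up to equivalence on simple functions, and that the target pair $(L^{(n/(n-\ell),\infty)},L^{(n/(n-\ell),1)})$ interpolates to the Lorentz space with the correct fine index $1/\alpha$ — the latter is the standard reiteration for Lorentz spaces $L^{(p,q)}$ with fixed $p$, which is routine, while the former requires the rearrangement description of $L(\log L)^\alpha$ above. The boundary/lower-order terms coming from the Green's function representation are harmless: they are bounded in $L^\infty$ by $C\|f\|_{L^1}$, hence in every Lorentz norm on the bounded domain $\Omega$, and are absorbed into the right-hand side of \eqref{LlogL1}. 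Finally, once $\nabla^{2m-\ell}u$ is controlled, the cases $1\le\ell\le 2m-1$ are obtained uniformly, completing the proof.
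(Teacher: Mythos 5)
Your reduction is genuinely different from the paper's, and cleaner in one respect: you use the Green's function $G_x$ for the Dirichlet problem directly, obtaining the pointwise bound $|\nabla^{2m-\ell}u|\le C\,I_\ell*|f|$ for all $1\le\ell\le 2m-1$ at once. The paper instead works with the whole-space fundamental solution $w=K*\hat f$, applies \cite[Cor.~6.16]{BS} to get $\nabla^{2m-1}w\in L^{(n/(n-1),1/\alpha)}(\R{n})$, and then needs a separate transfer step to pass to the Dirichlet problem (boundedness of the map $g\mapsto|\nabla^{2m-1}v_g|$ on $L^p$, hence on Lorentz spaces by interpolation), plus a Sobolev-embedding step to recover the lower-order cases $\ell>1$. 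Your route eliminates both of those steps, at the price of relying on the interior-and-up-to-the-boundary Green's function bound $|\nabla_x^{2m-\ell}G_x(y)|\le C|x-y|^{\ell-n}$ from \cite{DAS}, which the paper already invokes in Lemma~\ref{stimaBR}, so that is available.

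Where your argument is weaker is precisely where the paper leans on a black box. You propose to \emph{prove} $I_\ell:L(\log L)^\alpha\to L^{(n/(n-\ell),1/\alpha)}$ by real interpolation between the endpoints $L^1\to L^{(n/(n-\ell),\infty)}$ and $L\log L\to L^{(n/(n-\ell),1)}$; the paper simply cites \cite[Cor.~6.16]{BS}, which gives this mapping directly. Your interpolation is not routine: neither the source scale (obtaining $L(\log L)^\alpha$ from the couple $(L^1,L\log L)$) nor, more seriously, the target scale (obtaining $L^{(p,1/\alpha)}$ from the couple $(L^{(p,\infty)},L^{(p,1)})$ with \emph{fixed} first index $p$) fits the classical power-parameter $K$-method; both require interpolation with a function parameter or a rearrangement computation, and the target-side claim is exactly the kind of degenerate-couple situation where the naive reiteration formula fails. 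You flag this as the main obstacle, and it is a real one. The simplest fix is to not interpolate at all: cite \cite[Cor.~6.16]{BS} for the Riesz potential $I_\ell$ on $\R{n}$ (extended by zero, as in the paper) and plug it into your Green's function bound. With that change your proof is correct and somewhat shorter than the paper's.
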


\begin{proof} Define
$$\hat f:=
\left\{
\begin{array}{ll}
f& \textrm{in }\Omega \\
0& \textrm{in }\R{n}\backslash\Omega,
\end{array}
\right.
$$
and let $w:=K*\hat f$, where $K$ is the fundamental solution of $\Delta^m$. Then
$$|\nabla^{2m-1}w|=|(\nabla^{2m-1}K)*\hat f|\leq CI_1*|\hat f|,$$
where $I_1(x)=|x|^{1-n}$.
According to \cite[Cor. 6.16]{BS}, $|\nabla^{2m-1}w|\in L^{\big(\frac{n}{n-1},\frac{1}{\alpha}\big)}(\R{n})$ and
\begin{equation}\label{LlogL2}
\|\nabla^{2m-1} w\|_{\big(\frac{n}{n-1},\frac{1}{\alpha}\big)}\leq C\|\hat f\|_{L(\log L)^\alpha}=C\|f\|_{L(\log L)^\alpha}.
\end{equation}
We now use \eqref{LlogL2} to prove \eqref{LlogL1}, following a method that we learned from \cite{Hel}. Given $g:\Omega\to\R{n}$ measurable, let
$v_g$ be the solution to $\Delta^m v_g=\diver g$ in $\Omega$, with the same boundary condition as $u$, and
set $P(g):=|\nabla^{2m-1}v_g|$.
By $L^p$ estimates (see e.g. \cite{ADN}), $P$ is bounded from $L^p(\Omega;\R{n})$ into $L^p(\Omega)$ for $1<p<\infty$. Then, thanks to the interpolation theory for Lorentz spaces, see e.g. \cite[Thm. 3.3.3]{Hel}, $P$ is bounded from $L^{(p,q)}(\Omega;\R{n})$ into $L^{(p,q)}(\Omega)$ for $1<p<\infty$ and $1\leq q\leq \infty$. 
Choosing now $g=\nabla\Delta^{m-1}w$, we get $v_g=u$, hence 
$|\nabla^{2m-1}u|=P(\nabla\Delta^{m-1}w)$, and from \eqref{LlogL2} we infer
$$\|\nabla^{2m-1}u\|_{\big(\frac{n}{n-1},\frac{1}{\alpha}\big)}\leq C \|\nabla\Delta^{m-1}w\|_{\big(\frac{n}{n-1},\frac{1}{\alpha}\big)}\leq C \|f\|_{L(\log L)^\alpha}.$$
For $1<\ell\leq 2m-1$ \eqref{LlogL1} follows from the Sobolev embeddings, see \cite{ON}.
\end{proof}

\subsection*{Other useful results}

A proof of the results below can be found in \cite{mar1}.
The following Lemma can be considered a generalized mean value identity for polyharmonic function.

\begin{lemma}[Pizzetti \cite{Piz}]\label{lemmapiz} Let $u\in C^{2m}(B_R(x_0))$, $B_R(x_0)\subset \R{n}$, for some $m,n$ positive integers. Then there are positive constants $c_i=c_i(n)$ such that
\begin{equation}\label{pizzetti}
\Intm_{B_R(x_0)}u(x)dx =\sum_{i=0}^{m-1}c_iR^{2i}\Delta^i u(x_0)+ c_m R^{2m}\Delta^m u(\xi),
\end{equation}
for some $\xi \in B_R(x_0)$.
\end{lemma}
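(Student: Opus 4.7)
The plan is to establish Pizzetti's identity by a Taylor expansion in $r$ of the volume mean, combined with an $O(n)$-invariance argument to identify the coefficients and a one-variable Lagrange remainder to produce the final $\xi$.

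Without loss of generality take $x_0=0$ and consider $\phi(r):=\Intm_{B_r}u\,dx$. By the change of variables $x=ry$ one has $\phi(r)=\Intm_{B_1}u(ry)\,dy$, so $\phi\in C^{2m}([0,R])$ and all its derivatives may be computed by differentiating under the integral sign.

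Step 1 (polynomial identity). I would first show that for every polynomial $Q$ of degree at most $2m-1$,
$$\Intm_{B_r} Q\,dx=\sum_{i=0}^{m-1} c_i(n)\, r^{2i}\,\Delta^i Q(0),$$
with positive constants $c_i(n)$ depending only on $n$ and $i$. The left-hand side is a polynomial in $r$ of degree $\leq 2m-2$ in which only even powers of $r$ appear (odd-degree monomials integrate to $0$ on $B_r$ by the symmetry $x\mapsto -x$). The coefficient of $r^{2i}$ is an $O(n)$-invariant constant-coefficient differential operator of order $2i$ applied to $Q$ at $0$; by a standard invariant-theory fact the only such operator is a scalar multiple of $\Delta^i$. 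The constants $c_i(n)>0$ are then determined by testing the identity on $Q(x)=|x|^{2i}$, where $\Delta^j Q(0)$ vanishes unless $j=i$.

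Step 2 (Taylor decomposition). Writing $u=P+E$ with $P$ the Taylor polynomial of $u$ at $0$ of degree $2m-1$ and $E(x)=O(|x|^{2m})$, and using $\Delta^i P(0)=\Delta^i u(0)$ for $0\leq i\leq m-1$, Step 1 applied to $P$ yields the polynomial part of Pizzetti's formula. It remains to show that $\Intm_{B_R}E\,dx=c_m(n)\,R^{2m}\,\Delta^m u(\xi)$ for some $\xi\in B_R$.

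Step 3 (Lagrange remainder plus mean value). Define $\psi(r):=\phi(r)-\sum_{i=0}^{m-1}c_i\,r^{2i}\,\Delta^i u(0)$. Applying Step 1 to the Taylor polynomials of $u$ of every order $k<2m$ gives $\psi^{(k)}(0)=0$ for $k=0,\dots,2m-1$, so Taylor's theorem in one variable produces $r_\ast\in(0,R)$ with $\psi(R)=(R^{2m}/(2m)!)\,\psi^{(2m)}(r_\ast)$. A direct computation, differentiating $\phi(r)=\Intm_{B_1}u(ry)\,dy$ exactly $2m$ times and iterating the rotation-invariance argument of Step 1, expresses $\psi^{(2m)}(r_\ast)$ as a nonnegative-weight radial average of $\Delta^m u$ on $B_{r_\ast}$ with total mass $(2m)!\,c_m(n)$. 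Continuity of $\Delta^m u$ on $\overline{B_R}$ and the mean value theorem for integrals then yield $\xi\in B_{r_\ast}\subset B_R$ with $\psi^{(2m)}(r_\ast)=(2m)!\,c_m(n)\,\Delta^m u(\xi)$, which completes the proof.

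The main obstacle I expect is the explicit identification of $\psi^{(2m)}$ in Step 3 as a genuinely nonnegative-weight average of $\Delta^m u$. This can be made transparent either by iterating the divergence-theorem identity $M_v'(r)=(r/n)\,\Intm_{B_r}\Delta v\,dx$ for the spherical mean $M_v(r):=\Intm_{\partial B_r}v\,dS$ a total of $m$ times (producing an explicit nonnegative Cauchy-type iterated kernel against $\Delta^m u$), or by pinning down the normalization on the model family $u(x)=|x|^{2m}$, for which $\Delta^m u$ is a strictly positive constant and $\phi(r)$ can be computed in closed form — this fixes $c_m(n)>0$ and simultaneously certifies the sign of the weight.
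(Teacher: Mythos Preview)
The paper does not give its own proof of this lemma: it is stated in the appendix with the remark ``A proof of the results below can be found in \cite{mar1}.'' So there is nothing in the paper to compare your argument against line by line.

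That said, your outline is a standard and essentially correct route to Pizzetti's formula. Step~1 (the $O(n)$-invariance argument identifying the even-order coefficients with powers of the Laplacian) and Step~2 (replacing $u$ by its Taylor polynomial) are clean and rigorous as written. The only place where your write-up is a genuine sketch rather than a proof is Step~3, and you flag this yourself. Of the two fixes you propose, the first (iterating the divergence-theorem identity for the spherical mean) is the one that actually delivers the \emph{nonnegativity} of the weight you need for the integral mean-value theorem; the second (testing on $|x|^{2m}$) only pins down the constant $c_m(n)$ and does not by itself prove that $\psi^{(2m)}(r_\ast)$ is a nonnegative average of $\Delta^m u$. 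So if you want a complete argument, you should carry out the iterated spherical-mean computation explicitly: starting from $M_v'(r)=\tfrac{r}{n}\Intm_{B_r}\Delta v\,dx$ and iterating $m$ times produces an explicit Cauchy-type kernel against $\Delta^m u$ whose sign is manifest, after which the intermediate-value theorem yields your $\xi$.

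One small technical point: the hypothesis is $u\in C^{2m}(B_R(x_0))$, not $C^{2m}(\overline{B_R(x_0)})$, so when you invoke continuity of $\Delta^m u$ in Step~3, make sure you only use it on the closed ball $\overline{B_{r_\ast}}$ with $r_\ast<R$, which is compactly contained in $B_R$; your argument already arranges this since the Lagrange remainder gives $r_\ast\in(0,R)$.
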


\begin{prop}\label{c2m} Let $\Delta^{m}h=0$ in $B_{2}\subset\R{n}$. For every $0\leq \alpha<1$, $p\in [1,\infty)$ and $\ell\geq 0$ there are  constants $C(\ell,p)$ and $C(\ell,\alpha)$ independent of $h$ such that
\begin{eqnarray*}
\|h\|_{W^{\ell,p}(B_1)}&\leq &C(\ell,p) \|h\|_{L^1(B_2)}\\
\|h\|_{C^{\ell,\alpha}(B_1)}&\leq& C(\ell,\alpha)  \|h\|_{L^1(B_2)}.
\end{eqnarray*}
\end{prop}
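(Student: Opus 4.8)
The final statement to prove is Proposition~\ref{c2m}: interior estimates for polyharmonic functions. The plan is to establish the $L^1 \to C^{\ell,\alpha}$ and $L^1 \to W^{\ell,p}$ bounds on $B_1$ by a combination of the representation formula via a polyharmonic kernel and a standard interior-regularity bootstrap, exploiting that polyharmonic functions are smooth and that every derivative of $h$ is again polyharmonic.

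First I would recall that if $\Delta^m h = 0$ in $B_2$, then $h$ is real-analytic in the interior, and in particular $h \in C^\infty(B_{3/2})$; moreover $\Delta^m(\partial^\beta h) = 0$ for every multi-index $\beta$, since constant-coefficient operators commute with differentiation. Hence it suffices to prove the case $\ell = 0$, i.e.\ a bound of the form $\|h\|_{C^{0,\alpha}(B_1)} + \sup_{B_1}|h| \le C\|h\|_{L^1(B_2)}$ together with $\|h\|_{L^p(B_1)} \le C\|h\|_{L^1(B_2)}$: the general $\ell$ then follows by applying the $\ell=0$ estimate on a slightly larger ball $B_{r}$, $1 < r < 2$, to each derivative $\partial^\beta h$ with $|\beta| \le \ell$, after first controlling $\|\partial^\beta h\|_{L^1(B_r)}$ in terms of $\|h\|_{L^1(B_2)}$ via interior derivative estimates (obtained, e.g., by iterating a Caccioppoli-type inequality for $\Delta^m$, or directly from the smoothness and a covering/rescaling argument on dyadic shells between $B_r$ and $B_2$).

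For the base case I would use the mean value structure. By Pizzetti's formula (Lemma~\ref{lemmapiz}) applied to the polyharmonic function $h$, for any $x_0 \in B_1$ and any admissible radius $\rho$ (say $\rho \in (0,1/2)$) one has $\fint_{B_\rho(x_0)} h\,dx = \sum_{i=0}^{m-1} c_i \rho^{2i} \Delta^i h(x_0)$, since the remainder term vanishes. This is a linear system in the unknowns $\Delta^i h(x_0)$, $0 \le i \le m-1$: choosing $m$ distinct radii $\rho_1,\dots,\rho_m$ gives a Vandermonde-type invertible matrix (in the variables $\rho_j^2$) with coefficients depending only on $n$ and $m$, so each $\Delta^i h(x_0)$, and in particular $h(x_0)$ itself, is a fixed linear combination of the averages $\fint_{B_{\rho_j}(x_0)} h\,dx$. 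Since $\big|\fint_{B_{\rho_j}(x_0)} h\big| \le C(n)\rho_j^{-n}\|h\|_{L^1(B_2)}$, this yields $\sup_{B_1}|h| \le C\|h\|_{L^1(B_2)}$, hence also the $L^p(B_1)$ bound. For the H\"older seminorm one applies the same representation to $\partial_j h$ (also polyharmonic) to get a pointwise bound $\sup_{B_1}|\nabla h| \le C\|h\|_{L^1(B_{3/2})} \le C\|h\|_{L^1(B_2)}$, which controls $[h]_{C^{0,\alpha}(B_1)}$ for any $\alpha < 1$ (indeed even $\alpha = 1$).

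I do not expect a serious obstacle here; the statement is classical. The only point requiring a little care is the reduction from general $\ell$ to $\ell = 0$, namely getting $\|\partial^\beta h\|_{L^1(B_r)} \le C(\beta)\|h\|_{L^1(B_2)}$ for $1<r<2$. The cleanest route is to iterate the mean-value representation itself: writing $\partial^\beta h(x_0)$ as a linear combination of averages of lower-order derivatives over small balls and inducting on $|\beta|$, one never leaves $B_2$ and never invokes anything beyond Lemma~\ref{lemmapiz} and Fubini, so the constants depend only on $n$, $m$, $\beta$, and the choice of radii. Assembling these pieces gives both displayed inequalities with $C(\ell,p)$, $C(\ell,\alpha)$ independent of $h$, as claimed. $\hfill\square$
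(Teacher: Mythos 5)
The paper does not actually prove Proposition~\ref{c2m} in this document; it refers the reader to \cite{mar1}, so there is no in-paper argument against which to compare yours. Your central idea is nevertheless sound and clearly in the spirit of this appendix: for $x_0\in B_{3/2}$ and $m$ distinct radii $\rho_1,\dots,\rho_m\in(0,1/4)$, the vanishing of the remainder in Lemma~\ref{lemmapiz} gives the exact identity $\Intm_{B_{\rho_j}(x_0)}h\,dx=\sum_{i=0}^{m-1}c_i\rho_j^{2i}\Delta^i h(x_0)$, and since the matrix $(c_i\rho_j^{2i})$ is a diagonal matrix with positive entries times a Vandermonde matrix in $\rho_j^2$, it is invertible; hence $h(x_0)$ (and each $\Delta^i h(x_0)$) is a fixed linear combination of the averages, which yields $\sup_{B_{3/2}}|h|\le C\|h\|_{L^1(B_2)}$ and the $L^p(B_1)$ bound.

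The step to scrutinize is your ``cleanest route'' for the derivative estimates. Applying Pizzetti to $\partial^\beta h$ produces ball averages of $\partial^\beta h$ itself, not of lower-order derivatives of $h$; and because the indicator of a ball is not differentiable, you cannot move derivatives off $h$ and onto the averaging kernel. So the claim that ``$\partial^\beta h(x_0)$ is a linear combination of averages of lower-order derivatives over small balls'' is not literally available from the unweighted mean-value identity, and the induction as sketched would be circular (you would be bounding $\|\partial^\beta h\|_{L^\infty}$ by $\|\partial^\beta h\|_{L^1}$). Two standard repairs close the gap. First, integrate the Pizzetti identity in $\rho$ against $m$ smooth compactly supported radial weights; the left-hand sides become convolutions $h*\Phi_j(x_0)$ with $\Phi_j\in C_c^\infty$, the Vandermonde argument still applies, and now one may differentiate the kernel to get $|\partial^\beta h(x_0)|\le C(\beta)\|h\|_{L^1(B_2)}$ directly. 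Second, and equivalently to the alternatives you mention, once $\sup_{B_{7/4}}|h|\le C\|h\|_{L^1(B_2)}$ is established one can invoke interior $L^p$ or Schauder estimates for the constant-coefficient operator $\Delta^m$ (iterated, or via rescaling on dyadic shells) to control $\|h\|_{W^{\ell,p}(B_1)}$ and $\|h\|_{C^{\ell,\alpha}(B_1)}$ by $\|h\|_{L^\infty(B_{7/4})}$. With either fix your proof is complete and self-contained modulo Lemma~\ref{lemmapiz}.
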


A simple consequence of Lemma \ref{lemmapiz} and Proposition \ref{c2m} is the following Liouville-type Theorem.

\begin{trm}\label{trmliou} Consider $h:\R{n}\to \R{}$ with $\Delta^m h=0$ and $h(x)\leq C(1+|x|^\ell)$ for some $\ell\geq 0$. Then $h$ is a polynomial of degree at most $\max\{\ell,2m-2\}$. 
\end{trm}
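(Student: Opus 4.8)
The plan is to upgrade the one-sided growth hypothesis to a two-sided $L^1$-bound on $h$ by exploiting the mean value structure of polyharmonic functions, and then to feed that bound into the scaled interior estimates of Proposition~\ref{c2m} in order to annihilate a sufficiently high-order derivative of $h$.

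First I would record that $h\in C^\infty(\R{n})$ (by hypoellipticity of $\Delta^m$, or because $h$ is a classical solution), so that the numbers $\Delta^i h(0)$, $0\le i\le m-1$, are well defined. The key step is then a bound on the average of $h$ over a large ball: fix $R\ge 1$ and apply Lemma~\ref{lemmapiz} with $x_0=0$; since $\Delta^m h\equiv 0$ the remainder term drops and
$$\Intm_{B_R(0)}h\,dx=\sum_{i=0}^{m-1}c_iR^{2i}\Delta^i h(0),$$
whence $\big|\int_{B_R(0)}h\,dx\big|\le C R^n\sum_{i=0}^{m-1}R^{2i}|\Delta^i h(0)|\le C R^{n+2m-2}$ for all $R\ge 1$, with $C$ depending on $h$ but not on $R$. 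Writing $h^+:=\max\{h,0\}$ so that $|h|=2h^+-h$, the hypothesis gives $h^+(x)\le C(1+|x|^\ell)$, hence, with $d:=\max\{\ell,2m-2\}$,
$$\int_{B_R(0)}|h|\,dx=2\int_{B_R(0)}h^+\,dx-\int_{B_R(0)}h\,dx\le C R^{n+\ell}+C R^{n+2m-2}\le C R^{n+d}$$
for every $R\ge 1$.

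Finally, I would scale Proposition~\ref{c2m}: for an integer $k\ge 0$, applying its interior $C^{k}$-estimate to the function $y\mapsto h(Ry)$, which is polyharmonic on $B_2$, yields
$$R^k\sup_{B_R(0)}|\nabla^k h|\le C R^{-n}\int_{B_{2R}(0)}|h|\,dx\le C R^{d},$$
that is, $\sup_{B_R(0)}|\nabla^k h|\le C R^{d-k}$. Choosing $k:=\lfloor d\rfloor+1>d$ and letting $R\to\infty$ forces $\nabla^k h\equiv 0$ on all of $\R{n}$, so $h$ is a polynomial of degree at most $k-1=\lfloor d\rfloor\le\max\{\ell,2m-2\}$.

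The only non-routine ingredient is the observation that Pizzetti's identity turns the one-sided control on $h$ into genuine $L^1$-control on $|h|$; everything else is a scaling argument. I would expect the only point requiring care to be checking that the constants in the Pizzetti expansion, and hence in the $L^1$-bound, depend on $h$ but not on $R$, which is precisely what produces the power $R^{n+2m-2}$ (rather than something worse) and thus the degree $\max\{\ell,2m-2\}$.
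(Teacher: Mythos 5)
Your proof is correct and follows exactly the route the paper indicates: it uses Pizzetti's identity (Lemma~\ref{lemmapiz}) with vanishing remainder to control $\int_{B_R} h$, the identity $|h|=2h^+-h$ to upgrade the one-sided growth bound to an $L^1$-bound $\int_{B_R}|h|\,dx\le CR^{n+d}$, and then the scaled interior estimate of Proposition~\ref{c2m} to annihilate $\nabla^k h$ for any $k>d$. This is essentially the argument cited from~\cite{mar1}.
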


\end{document}